\definecolor{vegasgold}{rgb}{0.77, 0.7, 0.35}
\definecolor{darkgoldenrod}{rgb}{0.72, 0.53, 0.04}
\definecolor{gold(metallic)}{rgb}{0.83, 0.69, 0.22}
\DeclareFontFamily{U}{wncy}{}
\DeclareFontShape{U}{wncy}{m}{n}{<->wncyr10}{}
\DeclareSymbolFont{mcy}{U}{wncy}{m}{n}
\DeclareMathSymbol{\Sh}{\mathord}{mcy}{"58}
\newtheorem{theorem}{Theorem}[section]
\newtheorem{lemma}[theorem]{Lemma}
\newtheorem{conjecture}[theorem]{Conjecture}
\newtheorem*{theorem*}{Theorem}
\newtheorem*{corollary*}{Corollary}
\newtheorem{proposition}[theorem]{Proposition}
\newtheorem{corollary}[theorem]{Corollary}
\newtheorem{definition}[theorem]{Definition}
\numberwithin{equation}{section}
\theoremstyle{remark}
\newtheorem{remark}[theorem]{Remark}
\newcommand{\I}{\op{I}}
\newcommand{\Z}{\mathbb{Z}}
\newcommand{\Q}{\mathbb{Q}}
\newcommand{\F}{\mathbb{F}}
\newcommand{\Sel}{\mathrm{Sel}}
\newcommand{\W}{\mathcal{W}}
\newcommand{\op}[1]{\operatorname{#1}}
\newcommand\mtx[4] { \left( {\begin{array}{cc}
 #1 & #2 \\
 #3 & #4 \\
 \end{array} } \right)}
\begin{document}
\title[Class group Statistics for torsion fields]{Class group statistics for torsion fields generated by elliptic curves}

\author[A.~Ray]{Anwesh Ray}
\address[Ray]{Chennai Mathematical Institute, H1, SIPCOT IT Park, Kelambakkam, Siruseri, Tamil Nadu 603103, India}
\email{ar2222@cornell.edu}

\author[T.~Weston]{Tom Weston}
\address[T.~Weston]{Department of Mathematics, University of Massachusetts, Amherst, MA, USA.} 
  \email{weston@math.umass.edu}

\begin{abstract}
For a prime $p$ and a rational elliptic curve $E_{/\Q}$, set $K=\Q(E[p])$ to denote the torsion field generated by $E[p]:=\op{ker}\{E\xrightarrow{p} E\}$. The class group $\op{Cl}_K$ is a module over $\op{Gal}(K/\Q)$. Given a fixed odd prime number $p$, we study the average non-vanishing of certain Galois stable quotients of the mod-$p$ class group $\op{Cl}_K/p\op{Cl}_K$. Here, $E$ varies over all rational elliptic curves, ordered according to \emph{height}. Our results are conditional, since we assume that the $p$-primary part of the Tate-Shafarevich group is finite. Furthermore, we assume predictions made by Delaunay for the statistical variation of the $p$-primary parts of Tate-Shafarevich groups. We also prove results in the case when the elliptic curve $E_{/\Q}$ is fixed and the prime $p$ is allowed to vary. 
\end{abstract}

\subjclass[2020]{ 11G05, 11R29, 11R32, 11R34, 11R45}
\keywords{Torsion fields, class groups, Selmer groups of elliptic curves, arithmetic statistics}

\maketitle
 
\section{Introduction}\label{s 1}
Given a family of number fields, it is natural to study the statistical variation of class groups in the family. Of significant interest is the family of imaginary quadratic extensions $K/\Q$. Such investigations go back to the work of Gauss, who was interested in the determination of all imaginary quadratic fields of a given class number $h$. The problem was solved for $h=1$ by Baker, Heegner and Stark, for $h=2$ by Baker and Stark and for $h=3$ by Oesterl\'e. Watkins in \cite{watkins2004class} computed the imaginary quadratic fields for which $h\leq 100$. Soundararajan showed that the number of imaginary quadratic fields of class number $<x$ is asymptotically $\frac{3\zeta(2)}{\zeta(3)}x^2$ as $x\rightarrow \infty$. The story for real quadratic fields is significantly different. Predictions can be made for the distribution of class groups via Cohen-Lenstra heuristics for all quadratic number fields \cite{cohen1984heuristics} and Cohen-Lenstra-Martinet heuristics for general number fields \cite{martinet1990etude}. We refer to \cite{wood2016asymptotics} for a survey of these methods. The study of class numbers on average in families of number fields has notably gained significant momentum in the following works \cite{davenport1971density, wong1999rank, bhargava2005density,fouvry20074, ellenberg2007reflection,ellenberg2017}. On the other hand, there has been much interest in the study of the $p$-ranks of class groups of the cyclotomic extension $\Q(\mu_p)$. The Herbrand-Ribet theorem (cf. \cite{washington}) establishes a precise relationship between generalized Bernoulli numbers and the $p$-rank of the class group of $\Q(\mu_p)$. Ribet's converse relates congruences between Eisenstein series and cusp forms to the existence of $p$-cyclic unramified extensions of $\Q(\mu_p)$. The rank of the Eisenstein ideal is closely related to the Galois module structure of mod-$p$ class groups that arise from certain families of number fields of the form $\Q(N^{\frac{1}{p}})$, where $N\equiv 1\mod{p}$ is a prime, cf. \cite{MazurEisensteinIdeal, Lecouturier, SchStubleyclassgroups, WakeWangErickson}. It is however difficult to resolve distribution questions in this context, though there have been some computational experiments done (cf. \cite[Introduction]{WakeWangErickson}).

\par Motivated by such developments, we consider the family of number field extensions that are generated by the torsion in elliptic curves defined over $\Q$. To be specific, given an elliptic curve $E_{/\Q}$, and a prime $p$, we let $E[p]$ denote the $p$-torsion subgroup of $E(\bar{\Q})$. Let $K=\Q(E[p])$ be the field generated by $E[p]$, defined to be the fixed field of the residual representation 
\[\bar{\rho}:\op{Gal}(\bar{\Q}/\Q)\rightarrow \op{GL}_2(\F_p)\]on $E[p]\simeq \F_p\oplus \F_p$. The extension $K/\Q$ in general is a nonabelian extension that contains $\Q(\mu_p)$. Let $\op{Cl}_K$ be the class group of $K$. Note that $G=\op{Gal}(K/\Q)$ acts on $\op{Cl}_K$ and thus on the mod-$p$ class group $\op{Cl}_K/p\op{Cl}_K$. We study Galois stable quotients of the $\op{Cl}_K/p\op{Cl}_K$ that are isomorphic to $E[p]$ as a $G$-module. In some sense, such investigations generalize the study of the $p$-rank of the class group of $\Q(\mu_p)$. To be precise, we study the following related questions.
\begin{enumerate}
    \item Let $p$ be a fixed odd prime. As $E$ varies over all elliptic curves defined over $\Q$, how often is $\op{Hom}_{G}\left(\op{Cl}_K/p\op{Cl}_K, E[p]\right)$ non-vanishing?
     \item Let $E_{/\Q}$ be a fixed elliptic curve. As $p$ varies over all primes, how does the dimension of $\op{Hom}_{G}\left(\op{Cl}_K/p\op{Cl}_K, E[p]\right)$ vary? 
\end{enumerate}
In the context of the first question above, we order elliptic curves $E_{/\Q}$ according to \emph{height}. We shall assume throughout that the $p$-primary part of the Tate-Shafarevich group is finite. Our results also rely on heuristics for the statistical behavior of Tate-Shafarevich groups of elliptic curves. These heuristics were studied by Delaunay \cite{delaunay2007heuristics}. Let $\mathcal{E}_p$ be the set of elliptic curves $E_{/\Q}$ such that $\op{Hom}_G(\op{Cl}_K, E[p])\neq 0$. Given $\Delta\in \Z_{<0}$ with $\Delta\equiv 0,1\mod{4}$, let $H(\Delta)$ be Hurwitz class number, see Definition \ref{def hurwitz class number}. Given a set of elliptic curves $\mathcal{S}$, we denote by $\underline{\mathfrak{d}}(\mathcal{S})$ the lower density of Weierstrass equations for elliptic curves in $\mathcal{S}$ (see \eqref{def of density} for the definition of $\underline{\mathfrak{d}}(\mathcal{S})$).
\begin{theorem*}[Theorem \ref{main thm}]
Let $p$ be an odd prime, and assume that Conjecture \ref{conjecture delaunay} is satisfied. Then, we have that 
\[\underline{\mathfrak{d}}(\mathcal{E}_p)>(p^{-1}+p^{-3}-p^{-4})(1-p^{-1}-\mathfrak{d}_p-\mathfrak{d}_p'),\] where
\[\mathfrak{d}_p=\begin{cases}
\zeta(p)-1 & \text{ if }p\geq 5,\\
(\zeta(3)-1)+(\zeta(4)-1)+(\zeta(7)-1)& \text{ if }p=3,\\
\end{cases}\]
\[\mathfrak{d}_p'=\begin{cases}\left( \frac{p-1}{2p^2}\right)H(1-4p) & \text{ if }p\geq 7,\\
\left(\frac{p-1}{2p^2}\right)\left(H(1-4p)+H(p^2+1-6p)\right) & \text{ if }p\leq 5.\\
\end{cases}\]
\end{theorem*}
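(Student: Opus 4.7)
The plan is to reduce the non-vanishing of $\op{Hom}_G(\op{Cl}_K/p\op{Cl}_K, E[p])$ to the non-vanishing of $\Sh(E/\Q)[p]$, and then to invoke Conjecture \ref{conjecture delaunay} to compute the resulting density, subtracting off an explicit set of exceptional curves whose total density accounts for the terms $p^{-1}$, $\mathfrak{d}_p$ and $\mathfrak{d}_p'$.

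The first, and most delicate, step is to establish a cohomological criterion of the following form: for an elliptic curve $E_{/\Q}$ such that the residual representation $\bar\rho$ is surjective and such that $E$ satisfies suitable local conditions at $p$ and at the primes of bad reduction, the non-vanishing of $\Sh(E/\Q)[p]$ implies the non-vanishing of $\op{Hom}_G(\op{Cl}_K/p\op{Cl}_K, E[p])$. The mechanism is inflation-restriction for $K/\Q$: a class $c \in \Sh(E/\Q)[p] \subset H^1(\Q, E[p])$ is everywhere locally trivial, so its restriction to $G_K = \op{Gal}(\widebar{\Q}/K)$ is an unramified $G$-equivariant homomorphism $G_K \to E[p]$. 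Global class field theory then identifies such homomorphisms with $G$-equivariant maps $\op{Cl}_K/p\op{Cl}_K \to E[p]$. The local analysis required to ensure that the restricted class really does cut out an unramified extension at every prime is what forces the exceptional sets: the primes at which unramifiedness can fail correspond to the local obstructions codified by $\mathfrak{d}_p$ and $\mathfrak{d}_p'$.

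Next, I would extract a lower bound on the density of curves with $\Sh(E/\Q)[p] \neq 0$ from the Delaunay--Poonen--Rains conjecture. The prediction is
\begin{equation*}
\op{Prob}\bigl(\Sh(E/\Q)[p] \neq 0\bigr) = 1 - \prod_{k \geq 1}\bigl(1 - p^{1-2k}\bigr),
\end{equation*}
and expanding the product gives $p^{-1} + p^{-3} - p^{-4} + O(p^{-5})$; a careful truncation of the alternating series yields the displayed lower bound $p^{-1} + p^{-3} - p^{-4}$.

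The third step is to estimate the density of exceptional curves. Curves with non-surjective $\bar\rho$ form a density-zero set (by Duke's theorem and its descendants), absorbed by the $p^{-1}$ buffer. The density $\mathfrak{d}_p$ captures curves whose local Weierstrass data at $p$ makes the Selmer condition incompatible with unramifiedness in $K$; the zeta values $\zeta(p)-1$ (for $p\geq 5$) and $\zeta(3)-1,\zeta(4)-1,\zeta(7)-1$ (for $p=3$) arise from counting Weierstrass equations by height whose discriminants satisfy prescribed $p$-adic valuation conditions, organized according to Kodaira type. The density $\mathfrak{d}_p'$ captures curves whose reduction $E_{/\F_p}$ has a Frobenius trace that obstructs the class-field-theoretic identification; the Hurwitz class numbers $H(1-4p)$ (and $H(p^2+1-6p)$ in small characteristic) count such reductions via the Eichler--Deuring mass formula, weighted by the $(p-1)/(2p^2)$ factor coming from the equidistribution of Weierstrass coefficients modulo $p$.

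The main obstacle lies in the first step: setting up the cohomological reduction so that the exceptional set is tightly controlled. Since every prime at which a Selmer class can fail to be unramified must be absorbed into $\mathfrak{d}_p + \mathfrak{d}_p'$, a delicate local analysis at $p$ and at the bad primes is essential to keeping these two quantities as small as stated and to producing an explicit and quantitative version of the lower bound.
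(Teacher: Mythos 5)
Your overall architecture matches the paper's: reduce non-vanishing of $\op{Hom}_G(\op{Cl}_K/p\op{Cl}_K,E[p])$ to non-vanishing of $\Sh(E/\Q)[p]$ under local hypotheses (the paper simply cites Prasad--Shekhar \cite[Theorem 3.1]{prasad2021relating}, whose mechanism is essentially the inflation--restriction/class field theory argument you sketch), and then feed the Delaunay--Poonen--Rains prediction through the density of the complement of the exceptional locus. However, your accounting of the three subtracted terms is wrong, and as written you could not arrive at the stated formula.

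First, the term $p^{-1}$ is the density of Weierstrass equations with bad reduction at $p$ (the set $\mathcal{S}_p''$; the Prasad--Shekhar criterion requires good reduction at $p$). It is not a ``buffer'' absorbing the non-surjective locus: by Duke's theorem that locus has density zero and requires no subtraction at all. Second, $\mathfrak{d}_p$ does not come from local conditions at $p$; it bounds the density of curves with $p \mid c_\ell(E)$ for some prime $\ell \neq p$ (a Tamagawa-number hypothesis in the criterion), computed via the Cremona--Sadek local densities of the Kodaira types that force $p \mid c_\ell$ --- namely $\op{I}_{mp}$ for $p\geq 5$, plus $\rm{IV}$ and $\rm{IV}^*$ when $p=3$ --- which is exactly where $\zeta(p)-1$, resp.\ $(\zeta(3)-1)+(\zeta(4)-1)+(\zeta(7)-1)$, come from. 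Third, $\mathfrak{d}_p'$ bounds the density of curves with good reduction at $p$ but $E(\Q_p)[p]\neq 0$ (anomalous reduction), counted via Deuring's theorem and Hurwitz class numbers; this is the one term you essentially identified correctly. Finally, two points you gloss over are load-bearing: (i) the independence assertion in Conjecture \ref{conjecture delaunay}, $\underline{\mathfrak{d}}(\mathfrak{T}_p\cap \mathcal{E})=\underline{\mathfrak{d}}(\mathfrak{T}_p)\mathfrak{d}(\mathcal{E})$ for $\mathcal{E}$ cut out by congruence conditions, is needed before you may multiply the two densities; and (ii) in your cohomological step you must explain why a nonzero Selmer class does not die under restriction to $G_K$ --- this is the source of the $-1$ in the Prasad--Shekhar bound, and non-vanishing of the Hom group is rescued only because $\dim_{\F_p}\Sh(E/\Q)[p]$ is even by the Cassels--Tate pairing, hence at least $2$ whenever it is nonzero.
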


We obtain the following Corollary to the above result.
\begin{corollary*}[Corollary \ref{main cor new}]
Let $p$ be an odd prime, and assume that Conjecture \ref{conjecture delaunay} is satisfied. Then, we have that\[\underline{\mathfrak{d}}(\mathcal{E}_p)\geq  p^{-1}+O(p^{-3/2+\epsilon}).\] In other words, for any choice of $\epsilon>0$ there is a constant $C>0$, depending on $\epsilon$ and independent of $p$, such that \[\underline{\mathfrak{d}}(\mathcal{E}_p)\geq p^{-1}-Cp^{-3/2+\epsilon}.\]
\end{corollary*}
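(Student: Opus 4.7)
The plan is to start from the explicit lower bound in Theorem~\ref{main thm} and reduce the corollary to a uniform analytic estimate for the Hurwitz class number $H(1-4p)$. Using that $p^{-3}-p^{-4}>0$, the first factor of the bound exceeds $p^{-1}$, so
\[\underline{\mathfrak{d}}(\mathcal{E}_p) > p^{-1}\bigl(1-p^{-1}-\mathfrak{d}_p-\mathfrak{d}_p'\bigr) = p^{-1} - p^{-2} - p^{-1}\mathfrak{d}_p - p^{-1}\mathfrak{d}_p'.\]
It therefore suffices to show that each of the three negative terms on the right is $O(p^{-3/2+\epsilon})$; the finitely many small primes $p=3,5$ can then be absorbed into the constant $C$.

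I would handle the two easy terms first. Clearly $p^{-2}=O(p^{-3/2+\epsilon})$ for $\epsilon<1/2$. For the zeta contribution, when $p\geq 5$ we have $\mathfrak{d}_p = \zeta(p)-1 = \sum_{n\geq 2}n^{-p}$, and a trivial comparison gives $\mathfrak{d}_p = 2^{-p}+O(3^{-p})$, which decays super-polynomially in $p$; the value of $\mathfrak{d}_3$ is a fixed constant contributing only to $C$.

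The main step, and the only one requiring non-trivial input, is the bound on $\mathfrak{d}_p'$. I would invoke the classical estimate $H(\Delta)\ll |\Delta|^{1/2}\log|\Delta|$, a standard consequence of the analytic class number formula together with the bound $h(-d)\ll \sqrt{d}\log d$ for imaginary quadratic fields (and the fact that $H$ is built from a sum of such $h$-values over orders). Applied to $|\Delta|=4p-1$ this yields $H(1-4p)=O(p^{1/2+\epsilon})$, and combined with $\tfrac{p-1}{2p^2}=O(p^{-1})$ one obtains $\mathfrak{d}_p'=O(p^{-1/2+\epsilon})$, and hence $p^{-1}\mathfrak{d}_p' = O(p^{-3/2+\epsilon})$. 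The additional contribution of $H(p^2+1-6p)$ to $\mathfrak{d}_p'$ when $p\leq 5$ only affects finitely many primes and is absorbed into $C$. Assembling these estimates gives $\underline{\mathfrak{d}}(\mathcal{E}_p)\geq p^{-1}-Cp^{-3/2+\epsilon}$ for a suitable $C=C(\epsilon)$. The exponent $-3/2$ is essentially forced by the $\sqrt{p}$-growth of the Hurwitz class number, so any improvement would require input beyond the pointwise analytic bound; the main obstacle is simply locating and citing the correct form of that bound.
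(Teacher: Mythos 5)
Your proposal is correct and follows essentially the same route as the paper: expand the bound from Theorem \ref{main thm}, note that $\mathfrak{d}_p=\zeta(p)-1=O(2^{-p})$ is negligible, and control $\mathfrak{d}_p'$ via the bound $H(1-4p)=O(p^{1/2+\epsilon})$ so that $p^{-1}\mathfrak{d}_p'=O(p^{-3/2+\epsilon})$. The only difference is cosmetic: the paper cites \cite[Proposition 1.9]{lenstra1987factoring} for $\mathfrak{d}_p'=O\left(p^{-1/2}\log p(\log\log p)^2\right)$, whereas you derive the equivalent estimate from the class number formula.
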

\par Next, we study the second question, where $E$ is fixed and $p$ varies. In this context, we prove two results.

\begin{theorem*}[Theorem \ref{th 5.1}]
Let $E_{/\Q}$ be an elliptic curve and assume that the following conditions are satisfied, 
\begin{enumerate}
    \item $\Sh(E/\Q)$ is finite, 
    \item $\op{rank} E(\Q)\leq 1$, 
    \item $E$ does not have complex multiplication. 
\end{enumerate}
Then, for $100\%$ of primes $p$, we have that $\op{Hom}_G(\op{Cl}_K/p\op{Cl}_K, E[p])=0$.
\end{theorem*}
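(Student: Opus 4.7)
The plan is to realize $\op{Hom}_G(\op{Cl}_K/p\op{Cl}_K, E[p])$ as a subspace of a Selmer-type subgroup of $H^1(G_\Q, E[p])$, and then to leverage the three hypotheses to force this subspace to vanish for a density-$1$ set of primes $p$. The starting point is the five-term inflation--restriction sequence
\[
0 \to H^1(G, E[p]) \to H^1(G_\Q, E[p]) \to H^1(G_K, E[p])^G \to H^2(G, E[p]),
\]
together with the identification, via class field theory, of $\op{Hom}_G(\op{Cl}_K/p\op{Cl}_K, E[p])$ with the subspace of $H^1(G_K, E[p])^G$ consisting of everywhere-unramified classes. Since $E$ has no complex multiplication, Serre's open image theorem gives $G = \op{GL}_2(\F_p)$ for $100\%$ of primes $p$, and Sah's lemma applied to the scalar $-I \in Z(G)$ (which acts on $E[p]$ by $-1$) yields $H^i(G, E[p]) = 0$ for $i \geq 1$ as soon as $p$ is odd. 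Consequently, for such $p$ one obtains a canonical injection $\op{Hom}_G(\op{Cl}_K/p\op{Cl}_K, E[p]) \hookrightarrow H^1(G_\Q, E[p])$.

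Next I would show that the image lies in a Selmer group $\mathcal{S} \subseteq H^1(G_\Q, E[p])$ whose local conditions are no larger than the ``fine'' local conditions. A class $c$ in the image satisfies that $c|_{G_{K_w}}$ is unramified for every finite place $w$ of $K$. At any place $v \nmid pN$ (where $N$ is the conductor of $E$), the extension $K/\Q$ is unramified at $v$, so this becomes the usual unramified condition, which in the good-reduction case coincides with the $p$-Selmer local condition. At the finitely many places $v \mid pN$, a careful local comparison shows that, up to finitely many exceptional primes $p$, $\mathcal{S}$ is contained in the fine Selmer group of $E/\Q$, obtained by replacing the $p$-Selmer local condition at $v \mid p$ with the trivial subgroup of $H^1(\Q_v, E[p])$.

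With this containment, I would invoke the descent exact sequence
\[
0 \to E(\Q)/pE(\Q) \to \op{Sel}_p(E/\Q) \to \Sh(E/\Q)[p] \to 0.
\]
Finiteness of $\Sh(E/\Q)$ ensures $\Sh(E/\Q)[p] = 0$ for all but finitely many $p$, and the hypothesis $\op{rank} E(\Q) \leq 1$ combined with $p \nmid |E(\Q)_{\tor}|$ (for all but finitely many $p$) gives $\dim_{\F_p} \op{Sel}_p(E/\Q) \leq 1$. The fine Selmer group then identifies with the kernel of the localization $E(\Q)/pE(\Q) \to E(\Q_p)/pE(\Q_p)$; this vanishes trivially when $\op{rank} E(\Q) = 0$, and when $\op{rank} E(\Q) = 1$ with generator $P$ (modulo torsion), it vanishes for every $p$ with $P \notin p E(\Q_p)$---a set of density $1$ by a standard argument using the structure of $E(\Q_p)$ at primes of good reduction (the formal-group component of $P$ is $p$-indivisible outside a density-zero set of primes).

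The principal obstacle is the local Selmer comparison in the second paragraph: verifying that the ``$c|_{G_K}$ unramified'' condition at places $v \mid pN$ does not admit extra classes beyond the fine Selmer group outside a density-zero set of primes, since at these places the condition is a priori strictly weaker than the standard $p$-Selmer condition. Granting this, the two density-$1$ inputs---vanishing of $H^1(G, E[p])$ and vanishing of the fine Selmer group of $E/\Q$---combine to give $\op{Hom}_G(\op{Cl}_K/p\op{Cl}_K, E[p]) = 0$ for $100\%$ of primes $p$.
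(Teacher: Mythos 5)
Your proposal sets out to reprove the Prasad--Shekhar upper bound from first principles, whereas the paper's proof simply invokes it (Theorem \ref{psth2}, i.e.\ \cite[Theorem 4.2]{prasad2021relating}) and then checks its hypotheses: for all but finitely many $p$ one has good reduction, irreducibility of $E[p]$, $\Sh(E/\Q)[p]=0$ (by finiteness of $\Sh$), $p\nmid c_\ell(E)$, and $\mathcal{I}_p(E)=\emptyset$; the one remaining hypothesis, $E(\Q_p)[p]=0$, holds exactly for the non-anomalous primes, a density-one set for non-CM curves. With $\op{rank}E(\Q)\le 1$ the upper bound $\op{rank}E(\Q)+\dim\Sh(E/\Q)[p]-1+\#\mathcal{I}$ is then $\le 0$, and one is done. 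Your first two steps (class field theory identification, and killing $H^1(G,E[p])$ and $H^2(G,E[p])$ via $-I$ in the center once $G=\op{GL}_2(\F_p)$) are fine, but the step you yourself flag as ``the principal obstacle'' is a genuine gap, and it is precisely the content of the theorem the paper cites. Concretely: at $v=p$ the extension $K/\Q$ is (generically wildly) ramified, so the local condition ``$c|_{G_{K_w}}$ unramified'' admits extra classes coming from $H^1$ of the inertia subgroup of $\op{Gal}(K_w/\Q_p)$ acting on $E[p]$, and controlling these is exactly where the hypothesis $E(\Q_p)[p]=0$ enters. Your claim that the comparison works ``up to finitely many exceptional primes'' is therefore wrong in spirit: the exceptional set is the set of anomalous primes, which is of density zero but need not be finite, and no comparison with the \emph{fine} Selmer group (trivial condition at $p$) is to be expected in general.

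There is a second, independent gap in your endgame. In the rank-one case you reduce to showing that the generator $P$ satisfies $P\notin pE(\Q_p)$ for a density-one set of $p$, and you call this ``a standard argument.'' It is not: for non-anomalous $p$ one has $E(\Q_p)/pE(\Q_p)\cong\Z/p\Z$, and $P\in pE(\Q_p)$ is an elliptic Wieferich-type condition on the formal-group component of $P$ (a divisibility by $p^2$ where generically one only has divisibility by $p$). Proving that such primes have density zero for a fixed curve and point is, to the best of my knowledge, open (compare the classical Wieferich problem). The paper's route sidesteps this entirely because the ``$-1$'' already present in the Prasad--Shekhar upper bound absorbs the rank-one contribution, so no local indivisibility of $P$ is ever needed. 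If you want a self-contained proof along your lines, you must (a) carry out the local analysis at $v\mid pN$ under the non-anomalous and Tamagawa hypotheses, and (b) replace the fine-Selmer vanishing argument by the global argument that produces the $-1$.
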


\begin{theorem*}[Theorem \ref{th 5.2}]

Let $E_{/\Q}$ be an elliptic curve and assume that the following conditions are satisfied, 
\begin{enumerate}
    \item $\Sh(E/\Q)$ is finite,
    \item $\op{rank} E(\Q)\geq 2$, 
    \item $E$ does not have complex multiplication. 
\end{enumerate}
Then, for $100\%$ (i.e., for a Dirichlet density $1$ set) of primes $p$, we have that \[\op{dim}\op{Hom}_G(\op{Cl}_K/p\op{Cl}_K, E[p])\geq \op{rank} E(\Q)-1.\]
\end{theorem*}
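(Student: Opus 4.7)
The strategy is to produce many linearly independent $G$-equivariant homomorphisms from $\op{Cl}_K/p\op{Cl}_K$ to $E[p]$ out of the Mordell--Weil group of $E$, via Kummer theory and inflation-restriction. Since $E$ has no complex multiplication, Serre's open image theorem implies that $\bar{\rho}$ is surjective for all but finitely many primes $p$. Discard from consideration the (density zero) primes dividing $|E(\Q)_{\op{tor}}|$ or any Tamagawa number $c_\ell$ of $E$ at a prime $\ell$ of bad reduction, together with the finitely many small primes $p$ for which $H^1(\op{GL}_2(\F_p), \F_p^2)$ fails to vanish. The remaining primes still form a Dirichlet density one set. For each such $p$ we have $G \cong \op{GL}_2(\F_p)$, and the inflation-restriction sequence applied to the normal subgroup $G_K \subset G_\Q$ yields an injection
\[H^1(G_\Q, E[p]) \hookrightarrow H^1(G_K, E[p])^{G} = \op{Hom}_G(G_K^{\op{ab}}, E[p]).\]

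Fix a basis $P_1, \dots, P_r$ of the free quotient $E(\Q)/E(\Q)_{\op{tor}}$, where $r = \op{rank} E(\Q)$. Kummer theory produces classes $\kappa_{P_i} \in H^1(G_\Q, E[p])$ defined by the cocycle $\sigma \mapsto \sigma(Q_i) - Q_i$ for a choice of $Q_i \in E(\bar{\Q})$ with $p Q_i = P_i$. Because $p \nmid |E(\Q)_{\op{tor}}|$, these classes are linearly independent, and the above injection carries them to linearly independent elements $\phi_i \in \op{Hom}_G(G_K^{\op{ab}}, E[p])$. The central question is when $\phi_i$ descends to an element of $\op{Hom}_G(\op{Cl}_K/p\op{Cl}_K, E[p])$, equivalently, when $\phi_i$ is unramified at every finite prime $w$ of $K$.

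The local analysis splits into three cases. At primes $w$ above a prime $\ell \neq p$ of good reduction, $\phi_i$ is automatically unramified. At primes $w$ above a prime $\ell$ of bad reduction, the local Kummer image $E(\Q_\ell)/pE(\Q_\ell) \hookrightarrow H^1(\Q_\ell, E[p])$ lies in the unramified subgroup as soon as $p \nmid c_\ell$, a condition we have imposed, and restriction to $G_{K_w}$ preserves unramifiedness. The nontrivial condition arises at primes $w \mid p$ of $K$: since $E[p] \subset K_w$, the Kummer extension $K_w(p^{-1} P_i)/K_w$ is trivial (hence unramified) exactly when $P_i \in pE(K_w)$, for which $P_i \in pE(\Q_p)$ suffices. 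For our density one set of primes, the standard structure theorem gives $E(\Q_p)/pE(\Q_p) \cong \F_p$, so the kernel $V$ of the natural map $E(\Q)/pE(\Q) \to E(\Q_p)/pE(\Q_p)$ has $\F_p$-dimension at least $r - 1$; for every $P \in V$, the associated $\phi_P$ factors through $\op{Cl}_K/p\op{Cl}_K$.

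Assembling the pieces, the composite
\[V \hookrightarrow E(\Q)/pE(\Q) \hookrightarrow H^1(G_\Q, E[p]) \hookrightarrow \op{Hom}_G(G_K^{\op{ab}}, E[p])\]
lands inside $\op{Hom}_G(\op{Cl}_K/p\op{Cl}_K, E[p])$ and is injective, yielding the bound $\dim_{\F_p} \op{Hom}_G(\op{Cl}_K/p\op{Cl}_K, E[p]) \geq r - 1$. The main technical point is the local analysis at the finite set of primes of bad reduction of $E$: one must confirm that, after imposing $p \nmid c_\ell$, the image of the local Kummer map genuinely sits inside the unramified subgroup of $H^1(\Q_\ell, E[p])$, a standard but delicate computation involving the connected component of the N\'eron model. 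All primes excluded above constitute a set of Dirichlet density zero, so the conclusion holds on a density one set as required.
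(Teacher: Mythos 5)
Your argument is essentially correct, but it takes a genuinely different route from the paper. The paper simply verifies, for a density one set of primes $p$, the hypotheses of the Prasad--Shekhar lower bound (Theorem \ref{psthm1}): good reduction at $p$, irreducibility of $E[p]$, $c_\ell^{(p)}(E)=1$ at bad primes, and $E(\Q_p)[p]=0$; the conclusion $\dim\op{Hom}_G(\op{Cl}_{K}/p\op{Cl}_{K},E[p])\ge \op{rank}E(\Q)+\dim_{\F_p}\Sh(E/\Q)[p]-1\ge \op{rank}E(\Q)-1$ is then immediate. You instead reprove the Mordell--Weil part of that bound from scratch: inflation--restriction (using $H^1(\op{GL}_2(\F_p),E[p])=0$) to view Kummer classes as $G$-homomorphisms on $G_K^{\op{ab}}$, unramifiedness away from $p$ via the N\'eron component group and the condition $p\nmid c_\ell$, and at $p$ the passage to the codimension-$\le 1$ subspace $V=\ker\bigl(E(\Q)/pE(\Q)\to E(\Q_p)/pE(\Q_p)\bigr)$. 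This is exactly the mechanism inside the proof of \cite[Theorem 3.1]{prasad2021relating}, so your write-up is a self-contained substitute for the citation; what it buys is transparency (and it makes clear that only the Mordell--Weil contribution, not $\Sh[p]$, is needed for this statement), at the cost of having to carry out the local computations that the reference already supplies.

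One point deserves to be made explicit. Your step ``for our density one set of primes, the standard structure theorem gives $E(\Q_p)/pE(\Q_p)\cong\F_p$'' is not a consequence of the structure theorem alone: $E(\Q_p)\cong\Z_p\times T$ gives $E(\Q_p)/pE(\Q_p)\cong\F_p\oplus T[p]$, so you need $E(\Q_p)[p]=0$, i.e.\ $p$ non-anomalous (since $E(\Q_p)[p]\hookrightarrow\widetilde{E}(\F_p)[p]$ at good primes). The anomalous primes form an infinite (in general) but density zero set for non-CM curves, and this is precisely where the non-CM hypothesis and the ``Dirichlet density $1$'' (rather than ``all but finitely many'') formulation of the theorem enter; the earlier exclusions in your list (torsion, Tamagawa numbers, small $p$) are all finite. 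The paper handles this by citing the non-anomalous density result (e.g.\ \cite{murty1997modular}). Adding that citation closes the only real gap; without it, $V$ could have codimension $2$ at an anomalous prime and the bound would degrade to $\op{rank}E(\Q)-2$.
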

\par \emph{Organization:} In section \ref{s 2}, we introduce preliminary notions and also recall results of Prasad and Shekhar \cite{prasad2021relating} on the Galois module structure of the class group of the torsion field $\Q(E[p])$. In section \ref{s 3}, we study the density of elliptic curves satisfying local conditions at a possibly infinite set of primes. Such results are crucially used in proving the main results of the article. In section \ref{s 4}, we fix an odd prime $p$. The main results are Theorem \ref{main thm} and Corollary \ref{main cor new}, which establish that $\op{Hom}_G(\op{Cl}_K/p\op{Cl}_K, E[p])$ is non-vanishing for a positive density set of elliptic curves. Furthermore, we prove a lower bound for this density explicitly. These results are conditional since it relies on the heuristic of Delaunay. In section \ref{s 5}, we fix an elliptic curve $E_{/\Q}$ which does not have complex multiplication and study the variation of the dimension of $\op{Hom}_G\left(\op{Cl}_K/p\op{Cl}_K, E[p]\right)$, where $p$ varies over all prime numbers. Finally, in section \ref{s 6} we provide explicit computations for the prime $p=3$. These computations do also illustrate cases of interest, when some of the hypotheses on $E$ and $p$ are relaxed.

\subsection*{Acknowledgment} From September 2022 to September 2023, the first author's research is supported by the CRM Simons postdoctoral fellowship.

\section{Preliminaries}\label{s 2}
\par Fix an algebraic closure $\bar{\Q}$ of $\Q$ and let $E$ be an elliptic curve defined over $\Q$. Let $p$ be a prime number and set $\F_p=\Z/p\Z$ to denote the finite field with $p$ elements. For $n\geq 1$, denote by $E[p^n]$ the kernel of the multiplication map $\times p^n: E(\bar{\Q})\rightarrow E(\bar{\Q})$, and set $E[p^\infty]:=\bigcup_{n} E[p^n]$. The absolute Galois group $\op{G}_{\Q}:=\op{Gal}(\bar{\Q}/\Q)$ acts on $E[p^n]$. Choosing an isomorphism $E[p^n]\simeq \left(\Z/p^n \Z\right)^2$, let $\rho_n$ be the associated Galois representation on $E[p^n]$, \[\rho_n:\op{G}_{\Q}\rightarrow \op{GL}_2(\Z/p^n\Z).\] Set $\rho:=\varprojlim_n \rho_n$ to be the Galois representation on the $p$-adic Tate-module $\op{T}_p(E):=\varprojlim_n E[p^n]$. Set $\bar{\rho}$ to denote the mod-$p$ reduction of the characteristic-zero representation $\rho:\op{G}_{\Q}\rightarrow \op{GL}_2(\Z_p)$. We shall refer to $\bar{\rho}$ as the \emph{residual representation} and identify $\rho_1:\op{G}_{\Q}\rightarrow \op{GL}_2(\F_p)$ with $\bar{\rho}$. Let $\chi:\op{G}_{\Q}\rightarrow \Z_p^\times$ be the $p$-adic cyclotomic character, and $\bar{\chi}:\op{G}_{\Q}\rightarrow \F_p^\times$ its mod-$p$ reduction.
\par Given an integer $r$, set $\Z_p(r):=\Z_p(\chi^r)$ and given a $\Z_p[\op{G}_{\Q}]$-module $M$, set $M(r):=M\otimes_{\Z_p} \Z_p(\chi^r)$ to denote the \emph{$r$-th Tate-twist} of $M$. Note that if $M$ is an $\F_p[\op{G}_{\Q}]$-module, then, $M(r)=M\otimes_{\F_p} \F_p(r)$. Let $V_{\bar{\rho}}=E[p]$ be the underlying vector space for $\bar{\rho}$. Given $i,j\in \Z$ lying in the range $1\leq i\leq (p-1)$ and $1\leq j\leq (p-2)$, we set $V^{i,j}=V^{i,j}_{p,E}$ to denote $\op{Sym}^i(V_{\bar{\rho}})(j)$. Throughout, we shall assume that $\bar{\rho}$ is irreducible. However, for the sake of discussion, let us consider the special case when $\bar{\rho}$ is surjective. Then, upon choosing a basis of $V_{\bar{\rho}}=E[p]$, we may identify the image of $\bar{\rho}$ with $\op{GL}_2(\F_p)$ and thus the module $V^{i,j}$ is viewed as an irreducible representation of $\op{GL}_2(\F_p)$. Note that any irreducible representation of $\op{GL}_2(\F_p)$ over $\F_p$ is of the form $V^{i,j}$. A semisimple module $M$ over $\F_p[\op{G}_{\Q}]$ decomposes into a direct sum 
\[M\simeq \bigoplus_{i,j} \left(V^{i,j}\right)^{r_{i,j}(M)},\] where $r_{i,j}(M)\geq 0$.

\par Let $K=\Q(E[p])$ be the \emph{splitting field} of $\bar{\rho}$, taken to be the Galois extension of $\Q$ given by $K=\bar{\Q}^{\op{ker}\bar{\rho}}$. Set $G:=\op{Gal}(K/\Q)$, and identify $G$ with the image of $\bar{\rho}$. Note that when $\bar{\rho}$ is surjective, $G$ is identified with $\op{GL}_2(\F_p)$. There is a natural action of $G$ on the class group $\op{Cl}_K$, and thus on the mod-$p$ class group $\op{Cl}_K/p\op{Cl}_K$. The study of the Galois module structure of $\op{Cl}_K/p\op{Cl}_K$ shall be the primary focus of this paper. When $\bar{\rho}$ is surjective, its semi-simplification decomposes into a direct sum of irreducible representations of $\op{GL}_2(\F_p)$
\[\left(\op{Cl}_K/p \op{Cl}_K\right)^{\op{ss}}\simeq \bigoplus_{i,j} \left(V^{i,j}\right)^{n_{i,j}}.\] We are specifically interested in the space of homomorphisms $\op{Hom}_G\left(\op{Cl}_K/p\op{Cl}_K, E[p]\right)$. Note that $\op{Hom}_G\left(\op{Cl}_K/p\op{Cl}_K, E[p]\right)\neq 0$ precisely when there is a $G$-stable quotient of $\op{Cl}_K/p \op{Cl}_K$ which is isomorphic to $E[p]$. By class field theory, this corresponds to the existence of an unramified $(\Z/p\Z)^2$-extension $L$ of $K$ which is Galois over $\Q$, such that $\op{Gal}(L/K)\simeq E[p]$ as a module over $G=\op{Gal}(K/\Q)$. We have that
\[n_{1,0}\geq \op{dim}_{\F_p} \op{Hom}_G\left(\op{Cl}_K/p\op{Cl}_K, E[p]\right),\] with equality in the special case when the representation of $G$ on $\op{Cl}_K/p\op{Cl}_K$ is semisimple. 
\par Given a prime number $\ell$, let $\mathcal{E}$ be the N\'eron model of $E$ over $\Z_\ell$. Let $E^0(\Q_\ell)$ be the subset of points of $E(\Q_\ell)=\mathcal{E}(\Z_\ell)$ which reduce modulo $\ell$ to the identity component of $E_{/\F_\ell}$. Fix an absolute value $|\cdot |_p:\Q_p^\times\rightarrow \Q^\times$, normalized by $|p|_p^{-1}=p$. The \emph{Tamagawa number} at $\ell$ is set to be $c_\ell(E):=[E(\Q_\ell):E^0(\Q_\ell)]$, and set $c_\ell^{(p)}(E):=|c_\ell(E)|_p^{-1}$. Therefore, $c_\ell(E)$ is a unit in $\Z_p$ if and only if $c_\ell^{(p)}(E)=1$. We denote by $\op{Sel}_p(E/\Q)$ the $p$-Selmer group of $E$ over $\Q$ defined by
\[\op{Sel}_p(E/\Q):=\op{ker}\left\{H^1(\bar{\Q}/\Q, E[p])\rightarrow \prod_{\ell} H^1\left(\bar{\Q}_\ell/\Q_\ell, E(\bar{\Q}_\ell)\right)[p]\right\},\] where the restriction map for the prime $\ell$ is the composite
\[H^1(\bar{\Q}/\Q, E[p])\rightarrow H^1(\bar{\Q}_\ell/\Q_\ell, E[p])\rightarrow H^1(\bar{\Q}_\ell/\Q_\ell, E(\bar{\Q}_\ell))[p].\]
The Tate-Shafarevich group 
\[\Sh(E/\Q):=\op{ker}\left\{H^1(\bar{\Q}/\Q, E(\bar{\Q}))\rightarrow \prod_{l} H^1(\bar{\Q}_l/\Q_l, E(\bar{\Q}_l))\right\},\]
fits into an exact sequence 
\[0\rightarrow E(\Q)/p E(\Q)\rightarrow \op{Sel}_p(E/\Q)\rightarrow \Sh(E/\Q)[p]\rightarrow 0,\] see \cite[p.8, (1)]{coates2000galois}. \par Note that $\op{dim}_{\F_p} E(\Q)/p E(\Q)=\op{rank} E(\Q)+\op{dim}_{\F_p} E(\Q)[p]$, and since $\bar{\rho}$ is irreducible, $E(\Q)[p]=0$, and thus, $\op{dim}_{\F_p} E(\Q)/p E(\Q)=\op{rank} E(\Q)$. Hence, 
we find that 
\[\op{dim}_{\F_p} \op{Sel}_p(E/\Q)=\op{rank} E(\Q)+\dim_{\F_p} \Sh(E/\Q)[p].\]
Due to the Cassels-Tate pairing, the $\F_p$-dimension of $\Sh(E/\Q)[p]$ is even. The following result shows that there is an explicit relationship between $\op{Cl}_K/p\op{Cl}_K$ and the $p$-torsion in the Tate-Shafarevich group $\Sh(E/\Q)$ and
is key to constructing quotients of $\op{Cl}_K/p\op{Cl}_K$ that are isomorphic to $E[p]$. 

\begin{theorem}[Prasad-Shekhar]\label{psthm1}
Let $E$ be an elliptic curve over $\Q$ and $p$ be an odd prime number at which $E$ has good reduction. With respect to notation above, suppose that the following assumptions hold:
\begin{enumerate}
    \item $c_\ell^{(p)}(E)=1$ for all primes $\ell\neq p$,
    \item $E(\Q_p)[p]=0$.
\end{enumerate}
Then, we have that 
\begin{equation}\label{main eqn}\op{dim}_{\F_p}\op{Hom}_G\left(\op{Cl}_K/p\op{Cl}_K, E[p]\right)\geq \op{rank} E(\Q)+\dim_{\F_p} \Sh(E/\Q)[p]-1.\end{equation}
In particular, under the above assumptions, if $\Sh(E/\Q)[p]\neq 0$, or $\op{rank} E(\Q)\geq 2$, then, $\op{Hom}\left(\op{Cl}_K/p\op{Cl}_K, E[p]\right)$ is non-zero.
\end{theorem}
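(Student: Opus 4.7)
The plan is to restrict cohomology classes from $G_\Q$ to $G_K = \op{ker}(\bar\rho)$ and, via class field theory, identify the resulting $G$-equivariant homomorphisms with elements of $\op{Hom}_G(\op{Cl}_K/p\op{Cl}_K, E[p])$. Since $G_K$ acts trivially on $E[p]$, restriction identifies $H^1(G_K, E[p])^G$ with $\op{Hom}_G(G_K^{\mathrm{ab}}/p, E[p])$, and the inflation-restriction sequence associated to $1 \to G_K \to G_\Q \to G \to 1$ gives
\[0 \to H^1(G, E[p]) \to H^1(G_\Q, E[p]) \xrightarrow{\op{res}} \op{Hom}_G(G_K^{\mathrm{ab}}/p, E[p]).\]
A group-cohomology calculation using the irreducibility of $\bar\rho$ (so that $E[p]^G = 0$) will bound $\dim_{\F_p} H^1(G, E[p]) \leq 1$, which is where the $-1$ in the statement ultimately originates.

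The central step is to show that for every Selmer class $c \in \op{Sel}_p(E/\Q)$, the homomorphism $\op{res}(c) : G_K^{\mathrm{ab}} \to E[p]$ is unramified at every finite place $v$ of $K$; global class field theory then makes it factor through $\op{Cl}_K/p\op{Cl}_K$, with $G$-equivariance preserved automatically. For a good reduction prime $\ell \neq p$, the image of the Kummer map $E(\Q_\ell)/pE(\Q_\ell) \hookrightarrow H^1(\Q_\ell, E[p])$ is precisely the unramified subgroup, so $\op{res}(c)$ is unramified at every $v \mid \ell$. For a bad reduction prime $\ell \neq p$, the hypothesis $c_\ell^{(p)}(E) = 1$ is the standard device forcing the local Kummer image modulo $p$ to remain inside the unramified subgroup at $v$, once one observes that inertia acts trivially on $E[p]$ viewed as a $G_{K_v}$-module because $K$ splits $E[p]$. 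At the prime $p$, which is the most delicate case, one exploits good reduction together with $E(\Q_p)[p] = 0$ to analyze the Kummer image via the formal group filtration on $E(\Q_p)/pE(\Q_p)$; the vanishing $E(\Q_p)[p]=0$ removes the potential torsion obstruction and ensures that the restriction of $c$ to $G_{K_v}$ is unramified for every $v \mid p$.

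Combining these steps, the composite $\op{Sel}_p(E/\Q) \to \op{Hom}_G(\op{Cl}_K/p\op{Cl}_K, E[p])$ has kernel of $\F_p$-dimension at most $\dim_{\F_p} H^1(G, E[p]) \leq 1$, so
\[\dim_{\F_p} \op{Hom}_G(\op{Cl}_K/p\op{Cl}_K, E[p]) \geq \dim_{\F_p} \op{Sel}_p(E/\Q) - 1.\]
Using the exact sequence $0 \to E(\Q)/pE(\Q) \to \op{Sel}_p(E/\Q) \to \Sh(E/\Q)[p] \to 0$ together with $E(\Q)[p] = 0$ (from irreducibility of $\bar\rho$), the right-hand side equals $\op{rank} E(\Q) + \dim_{\F_p}\Sh(E/\Q)[p] - 1$, yielding \eqref{main eqn}.

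The hardest part is expected to be the local analysis at $p$: since $p$ ramifies in $K/\Q$ and the Selmer condition at $p$ is the Kummer condition rather than the unramified one, one has to trace carefully through the formal group filtration and exploit $E(\Q_p)[p]=0$ to conclude that the restriction of $c$ to $G_{K_v}$ becomes unramified. The handling of bad primes $\ell \neq p$ via the Tamagawa hypothesis $c_\ell^{(p)}(E)=1$ and the group-cohomology bound $\dim_{\F_p} H^1(G, E[p]) \leq 1$ (a finite case analysis on the possible images of $\bar\rho$) are comparatively routine.
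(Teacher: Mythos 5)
Your global architecture---restrict Selmer classes to $G_K$, invoke class field theory to land in $\op{Hom}_G(\op{Cl}_K/p\op{Cl}_K,E[p])$, and control the kernel of restriction via inflation--restriction---is the right skeleton; note that the paper itself gives no proof but simply cites \cite[Theorem 3.1]{prasad2021relating}, so you are reconstructing that argument. Your treatment of the places away from $p$ is essentially correct. However, the step you single out as the hardest is precisely where the argument fails: it is \emph{not} true that the restriction of every Selmer class to $G_{K_w}$ is unramified for $w\mid p$. Under the hypotheses one has $E(\Q_p)/pE(\Q_p)\cong \hat{E}(p\Z_p)/p\hat{E}(p\Z_p)\cong\Z/p\Z$, generated by the image of a formal-group point $x$, and the local condition at $p$ is the Kummer (Bloch--Kato finite) condition, which does \emph{not} coincide with the unramified condition at $p$: the extension $K_w(p^{-1}x)/K_w$ is ramified in general (already in the $\hat{\mathbb{G}}_m$-analogue, $L\bigl((1+p)^{1/p}\bigr)/L$ is ramified for $L\supseteq\Q_p(\mu_p)$). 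There is a decisive internal consistency check: for irreducible $E[p]$ and odd $p$ one in fact has $H^1(G,E[p])=0$ (if $p\nmid\#G$ this is trivial, and otherwise Dickson's classification forces $-I\in G$ and the center-kills argument applies), so your argument, run correctly, would yield $\dim\op{Hom}_G(\op{Cl}_K/p\op{Cl}_K,E[p])\geq\dim\op{Sel}_p(E/\Q)$ with no $-1$ at all; applied to a rank-one curve with $\Sh(E/\Q)[p]=0$ and $\mathcal{I}=\emptyset$ this contradicts the upper bound of Theorem \ref{psth2} (and the conclusion of Theorem \ref{th 5.1}).

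The correct bookkeeping is the reverse of yours: the $-1$ is paid at the place $p$, not in $H^1(G,E[p])$. One shows that the composite
\[\op{Sel}_p(E/\Q)\longrightarrow H^1(\Q_p,E[p])\longrightarrow \prod_{w\mid p}\op{Hom}(I_w,E[p])\]
factors through the one-dimensional local Kummer image $E(\Q_p)/pE(\Q_p)$---this is exactly where good reduction at $p$ and $E(\Q_p)[p]=0$ enter---so the subgroup of Selmer classes whose restriction to $G_K$ is unramified above $p$ has codimension at most one in $\op{Sel}_p(E/\Q)$; on that subgroup restriction is injective because $H^1(G,E[p])=0$, and its image lands in $\op{Hom}_G(\op{Cl}_K/p\op{Cl}_K,E[p])$ by the (correct) analysis at the places away from $p$. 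As written, your proof both asserts an unramifiedness statement at $p$ that is false in general and could not absorb the resulting loss, since you have already spent the $-1$ on $H^1(G,E[p])$; the gap is therefore essential rather than cosmetic.
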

\begin{proof}
The inequality \eqref{main eqn} follows from \cite[Theorem 3.1]{prasad2021relating}. The second assertion follows from Corollary 3.2 of \emph{loc. cit.}
\end{proof}

\begin{definition}\label{Ip defn}
Let $\mathcal{I}=\mathcal{I}_p(E)$ be the set of primes $\ell\neq p$ such that 
\begin{itemize}
    \item $E$ has split multiplicative reduction at $\ell$ and $E(\Q_\ell)[p]$ has rank $1$, 
    \item $E$ has non-split multiplicative reduction at $\ell$ and $E(\Q_\ell)[p]$ and $E(\Q_\ell^{\op{nr}})[p]$ has rank $1$.
\end{itemize}
\end{definition}

\begin{theorem}[Prasad-Shekhar]\label{psth2}
Let $E_{/\Q}$ be an elliptic curve and $p$ an odd prime at which the conditions of Theorem \ref{psthm1} are satisfied. Let $\mathcal{I}$ be the finite set from Definition \ref{Ip defn}. Then, the following bound is satisfied
\[\op{dim}_{\F_p}\op{Hom}\left(\op{Cl}_K/p\op{Cl}_K, E[p]\right)\leq \op{rank}E(\Q)+\op{dim}_{\F_p}\Sh(E/\Q)[p]-1 +\#\mathcal{I}.\]
\end{theorem}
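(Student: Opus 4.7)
The plan is to reinterpret $\op{Hom}_G(\op{Cl}_K/p\op{Cl}_K, E[p])$ as a Selmer-type subgroup of $H^1(\Q, E[p])$ cut out by the local condition ``unramified after restriction to $K$'', and then compare it term-by-term with the classical $p$-Selmer group $\op{Sel}_p(E/\Q)$, whose $\F_p$-dimension is already pinned down by the exact sequence recalled just before Theorem~\ref{psthm1} as $\op{rank} E(\Q) + \op{dim}_{\F_p}\Sh(E/\Q)[p]$. The desired inequality then reduces to a statement about how many extra dimensions the ``unramified in $K$'' conditions contribute over the Kummer conditions, and the expectation is that exactly one such extra dimension appears per prime in $\mathcal{I}$ and none elsewhere, with a uniform $-1$ discrepancy coming from the passage between the class-field-theoretic and Galois-cohomological viewpoints (the same $-1$ that already appears in the lower bound of Theorem \ref{psthm1}).

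First I would use inflation--restriction for $\Q \subset K$, noting that $E[p]$ is a trivial $G_K$-module and that the irreducibility (and typically surjectivity) of $\bar\rho$ forces $H^i(G, E[p])$ to be small for $i = 1, 2$. This identifies $\op{Hom}_G(\op{Cl}_K/p\op{Cl}_K, E[p])$, up to a bounded cohomological defect, with
\[\mathcal{S}^{\op{ur}} := \bigl\{ c \in H^1(\Q, E[p]) : \op{res}_{K_w}(c) \in H^1(K_w, E[p]) \text{ is unramified for every place } w \text{ of } K \bigr\},\]
since class field theory identifies $\op{Cl}_K/p$ with the maximal unramified abelian $p$-quotient of $G_K$ and $G$-equivariant maps to $E[p]$ with the image of $\mathcal{S}^{\op{ur}}$ under restriction.

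Next I would perform the local comparison between the $\mathcal{S}^{\op{ur}}$-condition and the Kummer condition $\op{im}(E(\Q_\ell)/pE(\Q_\ell) \to H^1(\Q_\ell, E[p]))$ defining $\op{Sel}_p(E/\Q)$. At primes $\ell \neq p$ of good reduction, $K/\Q$ is unramified at $\ell$ and both conditions equal $H^1_{\op{ur}}(\Q_\ell, E[p])$. At $\ell = p$, the hypothesis $E(\Q_p)[p] = 0$ forces the two conditions to match after decoding the inertia action through the filtration on $E[p]$ at $p$. At $\ell \neq p$ of bad reduction, the hypothesis $c_\ell^{(p)}(E) = 1$ combined with a Kodaira-type case analysis shows the two conditions coincide unless $\ell \in \mathcal{I}$, in which case the unramified condition strictly contains the Kummer image with codimension exactly one. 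This local analysis at primes in $\mathcal{I}$ is the technical heart of the argument: one must invoke the Tate uniformization at multiplicative primes and track the $G_{\Q_\ell}$-action on $E(\Q_\ell^{\op{nr}})[p]$ carefully, using the precise torsion conditions built into Definition \ref{Ip defn} to distinguish the split and non-split cases and to confirm the one-dimensional jump.

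Finally I would feed these local comparisons into the Greenberg--Wiles/Poitou--Tate Selmer duality formula, yielding
\[\op{dim}_{\F_p}\mathcal{S}^{\op{ur}} \leq \op{dim}_{\F_p}\op{Sel}_p(E/\Q) + \#\mathcal{I},\]
and then use the one-dimensional defect between $\mathcal{S}^{\op{ur}}$ and $\op{Hom}_G(\op{Cl}_K/p\op{Cl}_K, E[p])$ produced by the inflation--restriction identification (the same defect that produces the $-1$ in the lower bound of Theorem \ref{psthm1}) to conclude
\[\op{dim}_{\F_p}\op{Hom}_G(\op{Cl}_K/p\op{Cl}_K, E[p]) \leq \op{rank} E(\Q) + \op{dim}_{\F_p}\Sh(E/\Q)[p] - 1 + \#\mathcal{I}.\]
The main obstacle I anticipate is not the global cohomological bookkeeping but the local verification at $\mathcal{I}$: making the codimension count exact (and not merely an inequality) is what converts a crude upper bound into the clean $+\#\mathcal{I}$ correction stated in the theorem.
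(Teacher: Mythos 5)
The paper itself offers no argument for this statement: its ``proof'' is a one-line citation to \cite[Theorem 4.2]{prasad2021relating}, so what you are really reconstructing is the argument of that reference. Your overall architecture does match it: one realizes $\op{Hom}_G(\op{Cl}_K/p\op{Cl}_K,E[p])$ via class field theory and inflation--restriction as a Selmer-type subgroup $\mathcal{S}^{\op{ur}}$ of $H^1(\Q,E[p])$ cut out by the condition ``unramified after restriction to $K$'', and the set $\mathcal{I}$ arises exactly as the set of primes $\ell$ where the local kernel $\ker\bigl(H^1(\Q_\ell,E[p])\to H^1(I_{K_w},E[p])\bigr)$ exceeds the local Kummer image; computing that kernel in terms of $E(\Q_\ell)[p]$ and $E(\Q_\ell^{\op{nr}})[p]$ via the Tate parametrization is precisely what Definition \ref{Ip defn} encodes. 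That local analysis, which you correctly call the technical heart, is asserted rather than carried out, but the strategy there is sound.

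The genuine gap is your accounting of the $-1$, which is both unjustified and located in the wrong place. When $H^1(G,E[p])=H^2(G,E[p])=0$ (which holds, for instance, whenever $-1$ lies in the image of $\bar{\rho}$, by the standard center argument), inflation--restriction gives an \emph{isomorphism} $\mathcal{S}^{\op{ur}}\xrightarrow{\ \sim\ }\op{Hom}_G(\op{Cl}_K/p\op{Cl}_K,E[p])$; there is no ``one-dimensional defect'' in that identification to harvest. Nor can the $-1$ here be ``the same'' as the one in Theorem \ref{psthm1}: in the lower bound it records a \emph{permitted loss} (the one-dimensional local Kummer image at $p$ need not become unramified over $K$, so at most one Selmer dimension may fail to yield an unramified class), whereas in an upper bound you must exhibit a \emph{guaranteed saving} of one dimension. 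A crude nesting comparison of local conditions can only ever give $\dim\mathcal{S}^{\op{ur}}\leq\dim\op{Sel}_p(E/\Q)+\#\mathcal{I}$; to do better one must actually run the Greenberg--Wiles/Poitou--Tate bookkeeping and show that the unramified-over-$K$ local condition at $p$ (using $E(\Q_p)[p]=0$ and good reduction at $p$), together with the contribution $-\dim H^0(\R,E[p])=-1$ at the archimedean place, forces a net loss of one dimension relative to the classical Selmer conditions. That local computation at $p$ is a separate, nontrivial step, and your sketch replaces it with an appeal to a defect that, in the generic case, does not exist.
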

\begin{proof}
The above result is \cite[Theorem 4.2]{prasad2021relating}.
\end{proof}

\par Theorem \ref{psthm1} is crucially used in the proof of Theorem \ref{main thm}, which is our main result. On the other hand, both Theorems \ref{psthm1} and \ref{psth2} are applied to prove results in the case when $E$ is a fixed elliptic curve and $p$ varies over all primes at which $E$ has good reduction, see section \ref{s 5}.
\section{Density results for Weierstrass equations with local conditions}\label{s 3}
\par In this section, we recall results due to Cremona and Sadek \cite{cremona2020local} for the density of elliptic curves $E_{/\Q}$ satisfying local conditions at a prescribed set of primes. This set of primes may in fact be infinite. The results in this section are used to study the proportion of elliptic curves $E_{/\Q}$ ordered according to \emph{height} which satisfy the conditions of Theorem \ref{psthm1}. As always, we fix an odd prime $p$. Given a tuple of integers $a=(a_1,a_2,a_3,a_4,a_6)$ we have an associated elliptic curve $E_a$ defined by the long Weierstrass equation 
\[Y^2+a_1XY+a_3Y=X^3+a_2X^2+a_4X+a_6.\]

The \emph{height} of $E_a$ is defined as follows
\[\op{ht}(a)=\op{ht}(E_a):=\op{max}_i\{|a_i|^{\frac{1}{i}}\}.\] 

We let 
\[\begin{split}
    & b_2=a_1^2+4 a_2, b_4=a_1 a_3+2 a_4, b_6=a_3^2+4a_6,\\
    & b_8=a_1^2 a_6+4a_2a_6-a_1a_3a_4+a_2 a_3^2-a_4^2, \\
    &\Delta(a)=-b_2^2 b_8-8 b_4^3-27 b_6^2+9b_2 b_4 b_6.
\end{split}\]
Given a ring $R$, we let 
\[\W(R) = R^5 = \{a=(a_1,a_2,a_3,a_4,a_6)|a_i\in R\},\]
and let $\Delta(a)=\Delta(E_a)$ be the associated discriminant. Given a prime $\ell$, let $\W_{M}(\Z_\ell)$ be the subset of $\W(\Z_\ell)$ consisting of equations which are \emph{minimal} in the sense of \cite[section \rm{VII}.1]{silverman2009arithmetic}. Recall that if $E$ is an elliptic curve over $\Q_\ell$, the \emph{Kodaira type} is one of the following choices
\begin{itemize}
  \item $\op{I}_0$ if $E$ has good reduction;
  \item $\op{I}_{\ge1}$ if $E$ has bad multiplicative reduction, of type $\I_m$ for some
  $m\ge1$.
  \item Finally, if $E$ has bad additive reduction, we have the following choices:
     $\rm{II}$, $\rm{III}$, $\rm{IV}$, $\rm{II}^*$, $\rm{III}^*$, $\rm{IV}^*$, $\rm{I}_0^*$,
    $\rm{I}_{\ge1}^*$, the latter meaning type $\rm{I}_m^*$ for some $m\geq 1$.
\end{itemize} We refer to \cite[\rm{IV}, section 9]{silverman1994advanced} for a comprehensive study of Kodaira types and how they may be detected via Tate's algorithm. For $a\in \W(\Z_\ell)$, we say that $a$ has Kodaira type $T$ if the associated elliptic curve $E_a$ has Kodaira type $T$. Note that this definition applies to non-minimal Weierstrass equations as well. Given a Kodaira type $T$, let $\W^T(\Z_\ell)$ be the subset of $\W(\Z_\ell)$ of tuples $a$ of Kodaira type $T$, and set $\W_{M}^T(\Z_\ell)=\W^T(\Z_\ell)\cap \W_{M}(\Z_\ell)$. Let $\mu$ be the Haar measure on $\W(\Z_\ell)\simeq \Z_\ell^5$, and set $\rho^M(\ell):=\mu\left(\W_{M}(\Z_\ell)\right)$. Given a Kodaira type $T$ at the prime $\ell$, set \[\begin{split}
& \rho^M_T(\ell):=\mu\left(\W_M^T(\Z_\ell)\right),\\
& \rho_T(\ell):=\mu\left(\W^T(\Z_\ell)\right).\\
\end{split}\]According to \cite[Proposition 2.6]{cremona2020local}, we have that $\rho_T(\ell)=(1-\ell^{-10})^{-1}\rho^M_T(\ell)$. These local densities are calculated in \emph{loc. cit.}, and these calculations are summarized here.
\begin{theorem}[Cremona-Sadek]\label{th cremona sadek}
Let $\ell$ be a prime, then $\rho^M(\ell)=1-\ell^{-10}$. For each Kodaira type $T$, the local density $\rho_T^M(\ell)$ is given by the following values:
\[
\begin{tabular}{clc}
  \hline
  $T$ & $\rho_T^M(\ell)$ \\
  \hline
  $\rm{I}_{0}$ & $(\ell-1)/\ell$ \\
  $\rm{II}$ &  $(\ell-1)/\ell^3$ \\

  $\rm{III}$ &  $(\ell-1)/\ell^4$ \\

  $\rm{IV}$ &  $(\ell-1)/\ell^5$ \\

  $\rm{I}_0^*$ & $(\ell-1)/\ell^6$ \\

  $\rm{I}_{\ge1}^*$ & $(\ell-1)/\ell^7$ \\

  $\rm{IV}^*$ & $(\ell-1)/\ell^8$ \\

  $\rm{III}^*$ & $(\ell-1)/\ell^9$ \\

  $\rm{II}^*$ & $(\ell-1)/\ell^{10}$ \\
   $\rm{I}_m$ & $(\ell-1)^2/\ell^{m+2}$ \\
    $\rm{I}_{\geq m}$ & $(\ell-1)/\ell^{m+1}$. \\
  \hline
\end{tabular}
\]

\end{theorem}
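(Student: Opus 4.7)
The plan is to exploit Tate's algorithm, viewing each Kodaira type $T$ as defined by a (finite or countable) conjunction of congruence conditions on the coefficients $(a_1,\dots,a_6) \in \Z_\ell^5$, and to compute the resulting Haar measure of each stratum $\W_M^T(\Z_\ell)$ directly.

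I would start by characterizing non-minimality: $a \in \W(\Z_\ell)$ is non-minimal at $\ell$ precisely when there is an admissible change of variables $(x,y) \mapsto (u^2 x + r,\, u^3 y + s u^2 x + t)$ with $v_\ell(u) \geq 1$ yielding an integral model. Quotienting by the translation action (which preserves Haar measure), non-minimality reduces to the divisibility conditions $\ell^i \mid \tilde a_i$ on the translated coefficients; a direct count then shows the non-minimal locus has measure $\ell^{-10}$, giving $\rho^M(\ell) = 1 - \ell^{-10}$. For the good-reduction density, I would reduce modulo $\ell$ and count tuples $(a_1,\dots,a_6) \in \F_\ell^5$ with $\Delta \not\equiv 0$; for $\ell \geq 5$, completing the square and cube provides a free $\F_\ell^3$-action (in the parameters $r,s,t$) whose quotient is the short Weierstrass family $y^2 = x^3 + Ax + B$, reducing the count to $\#\{(A,B) \in \F_\ell^2 : 4A^3 + 27B^2 \neq 0\} = \ell(\ell-1)$, hence $\rho^M_{\mathrm{I}_0}(\ell) = (\ell-1)/\ell$.

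For the remaining Kodaira types I would walk through Tate's algorithm as in Silverman's exposition \cite[IV.9]{silverman1994advanced}. After a translation normalizing $a_1, a_2, a_3$ modulo $\ell$, each type is characterized by specific $\ell$-adic valuation constraints on the Tate auxiliary polynomials $b_2, b_4, b_6, b_8$ and on the discriminants of the associated quadratics and cubics appearing along the way. For the types $\mathrm{II}, \mathrm{III}, \mathrm{IV}, \mathrm{I}_0^*, \mathrm{IV}^*, \mathrm{III}^*, \mathrm{II}^*$ the constraints are finite in number, and each independent congruence contributes a factor of $1/\ell$ (closed condition) or $(\ell-1)/\ell$ (open condition), leading directly to the entries of the table. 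For the $\mathrm{I}_m$ series I would stratify by $m = v_\ell(\Delta)$: the $\mathrm{I}_m$ condition is characterized by $v_\ell(\Delta) = m$ together with multiplicative reduction, yielding the geometric series $\sum_{m \geq 1}(\ell-1)^2/\ell^{m+2}$; the $\mathrm{I}_m^*$ series is analogous.

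The principal obstacle is bookkeeping: correctly tracking which translation adjustments Tate's algorithm makes at each step, verifying that these preserve Haar measure, and identifying each density at exactly the right stage of the algorithm (minimal versus non-minimal, split versus non-split). A convenient consistency check, which also confirms the statement, is that summing the listed densities recovers $\rho^M(\ell)$: using $\sum_{m \geq 1}(\ell-1)^2/\ell^{m+2} = (\ell-1)/\ell^2$, the bad-reduction densities telescope to $\ell^{-1}(1-\ell^{-9})$, and adding $(\ell-1)/\ell$ yields $1 - \ell^{-10}$, as required.
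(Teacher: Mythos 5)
The paper does not actually prove this statement: its ``proof'' is a one-line citation of Propositions 2.1(3), 2.2 and 2.5 of Cremona--Sadek, so the real comparison is between your outline and their argument. Your outline reconstructs essentially their strategy: compute $\mu(\W_M(\Z_\ell))$ by quantifying the non-minimality conditions $\ell^i \mid \tilde a_i$ after a measure-preserving translation, get the good-reduction density by counting nonsingular Weierstrass equations over $\F_\ell$, and obtain the remaining entries by tracking the congruence conditions imposed at each branch of Tate's algorithm, with the type $\mathrm{I}_m$ stratum parametrized by $v_\ell(\Delta)=m$ on the multiplicative-reduction locus. Your closing consistency check is correct and worth keeping: $\sum_{m\ge1}(\ell-1)^2/\ell^{m+2}=(\ell-1)/\ell^2$, and the eleven listed densities then sum to $(\ell-1)\sum_{k=1}^{10}\ell^{-k}=1-\ell^{-10}=\rho^M(\ell)$, which is exactly the partition one expects. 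The one place where your sketch is too quick is the claim that each step of Tate's algorithm contributes an independent factor of $1/\ell$ or $(\ell-1)/\ell$: several steps instead ask whether an auxiliary quadratic or cubic over $\F_\ell$ has distinct roots, and the normalizations by completing the square and cube are unavailable at $\ell=2$ and $\ell=3$, so those primes require a separate (and more tedious) case analysis even though the final formulas turn out to be uniform in $\ell$. You flag the restriction $\ell\ge5$ for the good-reduction count but would need to close that gap for the theorem as stated, which holds for all primes $\ell$.
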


\begin{proof}
The above result follows from \cite[Propositions 2.1 (3), 2.2, 2.5]{cremona2020local}.
\end{proof}
\par Let $\mathcal{S}$ be a subset of $\W(\Z)\simeq \Z^5$, the \emph{density} of $\mathcal{S}$ is taken to be the following limit
\begin{equation}\label{def of density}\begin{split}\mathfrak{d}(\mathcal{S})
&:=\lim_{x\rightarrow \infty} \frac{\#\{a\in \mathcal{S}\mid \op{ht}(a)<x\}}{\#\{a\in \Z^5\mid \op{ht}(a)<x\}},\\
&=\lim_{x\rightarrow \infty} 2^{-5}x^{-16}\#\{a\in \mathcal{S}\mid |a_i|<x^i \text{ for }i=1,2,3,4,6\}.\\\end{split}\end{equation} Note that the above limit may not exist. We let $\overline{\mathfrak{d}}(\mathcal{S})$ and $\underline{\mathfrak{d}}(\mathcal{S})$ be the upper and lower limit respectively. Note that $\overline{\mathfrak{d}}(\mathcal{S})$ and $\underline{\mathfrak{d}}(\mathcal{S})$ are defined by replacing the limit by $\limsup$ and $\liminf$ respectively, and that these limits do exist unconditionally. If $\mathcal{S}$ is a set of elliptic curves $E_{/\Q}$, we shall by abuse of notation denote by $\mathcal{S}$ the subset of $\W(\Z)$ consisting of all tuples $a=(a_1,a_2,a_3,a_4,a_6)$ such that $E_a\in \mathcal{S}$. Note that since we work with long Weierstrass equations, the choice of $a$ for a given isomorphism class of elliptic curves is not unique. According to \cite[Theorem 1.1]{cremona2020local} the proportion of integral Weierstrass equation that are globally minimal is $1/\zeta(10)=93555/\pi^{10}\approx 99.99\%$. Let $\Phi$ be a possibly infinite set of prime numbers and for each prime $\ell\in \Phi$, let $U_\ell$ be a subset of $\W(\Z_\ell)$ defined by a set of congruence classes. In other words, $U_\ell$ is the inverse image of a subset of $\W(\Z/\ell^M \Z)$ for some integer $M>0$. Let $U$ be the family of conditions $\{U_\ell\mid \ell\in \Phi\}$. Given $N>0$, let \[Z_N(U):=\{a\in \W(\Z)\mid a\in U_\ell \text{ for some prime }\ell>N\}.\]

\begin{definition}
The family $U=\{U_\ell\mid \ell\in \Phi\}$ is said to be \emph{admissible} if \[\lim_{N\rightarrow \infty} \mathfrak{d}\left(Z_N(U)\right)=0.\]
\end{definition}

Let $\W_U$ be the set of integral Weierstrass equations $a\in \W(\Z)$ not satisfying $U_\ell$ at any prime $\ell\in \Phi$.

\begin{proposition}\label{formula for density}
Let $U$ be an admissible family and $s_\ell=\mu(U_\ell)$. Then, $\sum_{\ell\in \Phi} s_\ell$ converges and 
\[\mathfrak{d}(\W_U)=\prod_{\ell\in \Phi} (1-s_\ell).\]
\end{proposition}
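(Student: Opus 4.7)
The plan is to handle finite truncations of $\Phi$ via the Chinese Remainder Theorem and equidistribution, and then pass to the infinite product by truncating $\Phi$ above a growing threshold, using admissibility to kill the tail. Write $\Phi_N := \{\ell \in \Phi : \ell \leq N\}$ and let $\W_U^{(N)} \subseteq \W(\Z)$ denote the integral Weierstrass equations that avoid $U_\ell$ for every $\ell \in \Phi_N$, so that $\W_U \subseteq \W_U^{(N)}$ and $\W_U^{(N)} \setminus \W_U \subseteq Z_N(U)$ (the latter because any $a$ in the difference must fail some $U_\ell$-condition for $\ell \in \Phi$ with necessarily $\ell > N$). Since each $U_\ell$ with $\ell \in \Phi_N$ is pulled back from some $\W(\Z/\ell^{M_\ell}\Z)$, the condition defining $\W_U^{(N)}$ is cut out by residue classes modulo $Q := \prod_{\ell \in \Phi_N}\ell^{M_\ell}$; the Chinese Remainder Theorem identifies $\W(\Z/Q\Z)$ with $\prod_{\ell \in \Phi_N} \W(\Z/\ell^{M_\ell}\Z)$, so the proportion of allowed residues factors as $\prod_{\ell \in \Phi_N}(1-s_\ell)$, and standard equidistribution of integer points in the boxes $\{|a_i| < x^i\}$ among residue classes mod $Q$ (valid because $Q$ is fixed while $x\to\infty$) then yields $\mathfrak{d}(\W_U^{(N)}) = \prod_{\ell \in \Phi_N}(1-s_\ell)$.

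Combining the two set-theoretic inclusions gives the sandwich
\[\prod_{\ell \in \Phi_N}(1-s_\ell) - \mathfrak{d}(Z_N(U)) \;\leq\; \underline{\mathfrak{d}}(\W_U) \;\leq\; \overline{\mathfrak{d}}(\W_U) \;\leq\; \prod_{\ell \in \Phi_N}(1-s_\ell).\]
Letting $N \to \infty$, admissibility drives $\mathfrak{d}(Z_N(U))$ to $0$, while the truncated products decrease monotonically to $\prod_{\ell \in \Phi}(1-s_\ell)$; this simultaneously establishes the existence of $\mathfrak{d}(\W_U)$ and the claimed product formula.

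Convergence of $\sum_{\ell \in \Phi} s_\ell$ will come out as a byproduct of the same machinery. For any finite $\Phi_0 \subseteq \Phi$ consisting of primes exceeding $N$, inclusion-exclusion applied to the finite-case formula above gives $\mathfrak{d}\bigl(\bigcup_{\ell \in \Phi_0}\{a : a \in U_\ell\}\bigr) = 1 - \prod_{\ell \in \Phi_0}(1-s_\ell)$; since this union lies inside $Z_N(U)$, choosing $N$ large enough that $\mathfrak{d}(Z_N(U)) < 1$ (possible by admissibility) forces $\prod_{\ell \in \Phi,\, \ell > N}(1-s_\ell) \geq 1 - \mathfrak{d}(Z_N(U)) > 0$, which is equivalent to $\sum_{\ell > N} s_\ell < \infty$. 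The main obstacle, and the place where admissibility is indispensable, is controlling the tail error in the sandwich: without it the density of $\W_U$ need not exist at all, since there is no a priori way to bound the cumulative contribution of arbitrarily large primes $\ell \in \Phi$ purely from the individual densities $s_\ell$.
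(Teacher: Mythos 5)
Your argument is correct. Note, however, that the paper does not actually prove this proposition: it simply cites \cite[Proposition 3.4]{cremona2020local}, so what you have written is a self-contained reconstruction of the argument that lives in the cited reference rather than a parallel to anything in the paper itself. Your reconstruction is the standard one and all the steps check out: the two inclusions $\W_U\subseteq \W_U^{(N)}$ and $\W_U^{(N)}\setminus\W_U\subseteq Z_N(U)$ are exactly right, the CRT/equidistribution computation of $\mathfrak{d}(\W_U^{(N)})$ is valid because the modulus $Q$ is fixed while the box $\{|a_i|<x^i\}$ grows in every coordinate, and the sandwich plus admissibility yields both existence of $\mathfrak{d}(\W_U)$ and the product formula. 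Two minor points of hygiene: since the definition of admissibility only guarantees $\mathfrak{d}(Z_N(U))\to 0$, you should phrase the lower half of the sandwich with $\overline{\mathfrak{d}}(Z_N(U))$ (the inclusions only give $\underline{\mathfrak{d}}(\W_U)\geq \mathfrak{d}(\W_U^{(N)})-\overline{\mathfrak{d}}(Z_N(U))$), which costs nothing; and in the convergence step, the passage from $\prod_{\ell>N}(1-s_\ell)>0$ to $\sum_{\ell>N}s_\ell<\infty$ tacitly uses that $s_\ell<1$ for all $\ell>N$, which your positivity bound does force. I would also flag that your final derivation of convergence is arguably cleaner than deducing it separately first: you get it as a corollary of the same truncation estimate, which is a perfectly legitimate ordering of the logic.
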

\begin{proof}
The above result follows from \cite[Proposition 3.4]{cremona2020local}.
\end{proof}

\begin{definition}
Given a Kodaira type $T$, let $U(T)$ be the family of conditions on the set of primes $\ell\neq p$, where $U_\ell$ consists of local Weierstrass equations $a\in \W(\Z_\ell)$ satisfying $T$. Thus, $\W_{U(T)}$ consists of integral Weierstrass equations $a\in \W(\Z)$ such that $E_a$ does not satisfy $T$ at any prime $\ell\neq p$.
\end{definition}

\begin{lemma}\label{density wut}
Let $T$ be a Kodaira type such that $U(T)$ is admissible, then, the density of $\W_{U(T)}$ exists and equals
\[\mathfrak{d}(\W_{U(T)})=\prod_{\ell\neq p}\left(1-\rho_T(\ell)\right)\geq 1-\sum_{\ell\neq p} \rho_T(\ell).\]
\end{lemma}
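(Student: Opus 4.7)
The plan is to derive the lemma as a direct application of Proposition \ref{formula for density}, followed by an elementary inequality for infinite products.

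First, I would unpack the definitions. By construction, $U(T)=\{U_\ell\mid \ell\neq p\}$ with $U_\ell\subseteq \W(\Z_\ell)$ consisting of those local Weierstrass equations whose associated elliptic curve has Kodaira type $T$ at $\ell$. Thus $s_\ell := \mu(U_\ell) = \rho_T(\ell)$ by the definition of $\rho_T(\ell)$ given just before Theorem \ref{th cremona sadek}, and $\W_{U(T)}$ is precisely the set of $a\in\W(\Z)$ such that $E_a$ does \emph{not} have Kodaira type $T$ at any prime $\ell\neq p$.

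Next, since the hypothesis assumes $U(T)$ is admissible, Proposition \ref{formula for density} applies directly. It yields that $\sum_{\ell\neq p}\rho_T(\ell)$ converges and that the density of $\W_{U(T)}$ exists, with
\[
\mathfrak{d}(\W_{U(T)}) \;=\; \prod_{\ell\neq p}\bigl(1-\rho_T(\ell)\bigr).
\]
This handles the equality in the claim and also the existence of the limit.

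For the inequality, I would invoke the elementary estimate that for any finite or convergent sequence $(x_\ell)$ with $x_\ell\in[0,1]$, one has $\prod_\ell(1-x_\ell)\geq 1-\sum_\ell x_\ell$. In our setting $\rho_T(\ell)\in[0,1]$ since it is a measure on a probability space, and convergence of the sum is already ensured by admissibility together with Proposition \ref{formula for density}. A quick induction on finite partial products (using $(1-x)(1-y)=1-x-y+xy\geq 1-x-y$) followed by passage to the limit as the finite index set exhausts the primes $\ell\neq p$ gives the bound
\[
\prod_{\ell\neq p}\bigl(1-\rho_T(\ell)\bigr)\;\geq\; 1-\sum_{\ell\neq p}\rho_T(\ell),
\]
completing the proof. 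There is essentially no obstacle here; the entire statement is a bookkeeping consequence of the previously cited results, and the only care needed is to confirm that the hypothesis of admissibility really does supply both the convergence of the infinite product and the convergence of the bounding series.
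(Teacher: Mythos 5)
Your proposal is correct and matches the paper's argument, which simply cites Proposition \ref{formula for density} for the equality; your added verification of the elementary bound $\prod_{\ell}(1-x_\ell)\geq 1-\sum_{\ell}x_\ell$ is just filling in a detail the paper leaves implicit.
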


\begin{proof}
The result follows directly from Proposition \ref{formula for density}.
\end{proof}
\begin{theorem}\label{thm kod types}
Let $p$ and $\ell$ be distinct prime numbers, and $E$ an elliptic curve over $\Q_\ell$. The following assertions hold.
\begin{enumerate}
    \item\label{c1 thm kod types} Suppose that $p\geq 5$, then, $p|c_{\ell}(E)$ if and only if $E$ has Kodaira type $T=\op{I}_{pm}$ for $m\in \Z_{\geq 1}$.
    \item\label{c2 thm kod types} Suppose that $p=3$, then, $p|c_{\ell}(E)$ if and only if $E$ has Kodaira type $T=\op{I}_{3m}$ for $m\in \Z_{\geq 1}$, or $T=\op{IV}$, or $T=\op{IV}^*$.
\end{enumerate}
\end{theorem}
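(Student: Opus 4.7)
The plan is to apply Tate's algorithm (for instance, as developed in \cite[IV.9]{silverman1994advanced}) directly: for each Kodaira type $T$ over $\Q_\ell$, the algorithm determines both the component group $\Phi_\ell(E) = E(\Q_\ell)/E^0(\Q_\ell)$ and its order $c_\ell(E)$. Once we have the full list of possible Tamagawa numbers in each type, the theorem becomes a simple divisibility check, case by case, for the two values of $p$ of interest.

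First, I would tabulate the output of Tate's algorithm: in the additive types one gets $c_\ell = 1$ for $\op{II}$ and $\op{II}^*$; $c_\ell = 2$ for $\op{III}$ and $\op{III}^*$; $c_\ell \in \{1,3\}$ for $\op{IV}$ and $\op{IV}^*$ (according to whether a certain quadratic over the residue field splits); $c_\ell \in \{1,2,4\}$ for $\op{I}_0^*$ and $c_\ell \in \{2,4\}$ for $\op{I}^*_{\geq 1}$. In the multiplicative case $\op{I}_m$, one has $c_\ell = m$ for split reduction and $c_\ell = \gcd(m,2) \in \{1,2\}$ for non-split reduction. For part \eqref{c1 thm kod types}, since $p \geq 5$, the prime $p$ cannot divide any element of $\{1,2,3,4\}$, so every additive type and the entire non-split multiplicative case are excluded. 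The only remaining possibility is split $\op{I}_m$ with $p \mid m$, i.e.\ $T = \op{I}_{pm}$ for some $m \geq 1$; conversely such $T$ clearly yields $p \mid c_\ell$. For part \eqref{c2 thm kod types}, with $p = 3$, the same reasoning applies, but now the value $3$ additionally arises for types $\op{IV}$ and $\op{IV}^*$ (and only there), which accounts for the two extra Kodaira symbols appearing in the statement.

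The main subtlety is bookkeeping around the split versus non-split distinction in Kodaira type $\op{I}_m$: a non-split $\op{I}_{pm}$ with $pm$ even produces $c_\ell = 2$, so the equivalence ``$p \mid c_\ell \iff T = \op{I}_{pm}$'' should be interpreted as a statement about the Kodaira symbol together with the tacit specification that the reduction be split in the relevant direction (or, equivalently, as the natural necessary condition on the symbol, sharpened by a parity/splitness check). I would be careful to state this explicitly and verify that the effect on the densities used in later sections is as claimed. Beyond this, the remaining verifications are routine applications of Tate's algorithm, so the argument reduces to assembling the case analysis above with appropriate citations to \cite[IV.9]{silverman1994advanced}.
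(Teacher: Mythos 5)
Your proposal is correct and takes essentially the same route as the paper, which simply cites the standard table of Tamagawa numbers by Kodaira type (Table 4.1 in Silverman's \emph{Advanced Topics}) --- exactly the table you reconstruct from Tate's algorithm before doing the divisibility check. Your observation about the non-split case is a genuine and worthwhile catch: for non-split $\op{I}_{pm}$ one has $c_\ell\in\{1,2\}$, so the ``if'' direction of the stated biconditional fails as written; however, only the ``only if'' direction is used later (to bound $\overline{\mathfrak{d}}(\mathcal{S}_p)$ from above by the density of the corresponding Kodaira types), so the imprecision is harmless for the densities in Theorem \ref{technical thm}.
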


\begin{proof}
The result is well known, see \cite[Table 4.1, p.365]{silverman1994advanced}.
\end{proof}
We come to the main result of this section, which is subsequently used in the proof of our main result in the next section.
\begin{definition}\label{def of sp}
 Recall that $p$ is a fixed prime number. Let $\mathcal{S}_{p}$ be the set of elliptic curves $E_{/\Q}$ such that $p|c_\ell(E)$ for some prime $\ell\neq p$.
\end{definition}
\begin{theorem}\label{technical thm}
 Let $E$ be an elliptic curve and $p$ an odd prime number. We have the following assertions
\begin{enumerate}
    \item if $p\geq 5$, then, \[\overline{\mathfrak{d}}(\mathcal{S}_{p})< \zeta(p)-1\]
    \item if $p=3$, then, \[\begin{split}\overline{\mathfrak{d}}(\mathcal{S}_{p}) & <(\zeta(3)-1)+(\zeta(4)-1)+(\zeta(7)-1).\end{split}\]
\end{enumerate}
\end{theorem}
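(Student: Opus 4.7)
My plan is to exhibit $\mathcal{S}_p$ as the complement of the ``good'' locus of an admissible family of local conditions in the sense of Section \ref{s 3}, and then bound the resulting product by an elementary sum estimate. By Theorem \ref{thm kod types}, the condition $p\mid c_\ell(E)$ at a prime $\ell\neq p$ is equivalent to the local Kodaira type of $E$ at $\ell$ lying in the set
\[
\mathcal{T}_p:=\begin{cases} \{\op{I}_{pk}:k\geq 1\} & \text{if }p\geq 5, \\ \{\op{I}_{3k}:k\geq 1\}\cup\{\op{IV},\op{IV}^*\} & \text{if }p=3. \end{cases}
\]
For each $\ell\neq p$ I let $U_\ell\subset\W(\Z_\ell)$ consist of those Weierstrass tuples whose associated curve has Kodaira type in $\mathcal{T}_p$, and set $s_\ell:=\mu(U_\ell)=\sum_{T\in\mathcal{T}_p}\rho_T(\ell)$. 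Then the set $\W_U$ associated to $U=\{U_\ell\}_{\ell\neq p}$ is precisely $\W(\Z)\setminus\mathcal{S}_p$.

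Next I would compute each $s_\ell$ using Theorem \ref{th cremona sadek} together with the relation $\rho_T(\ell)=(1-\ell^{-10})^{-1}\rho_T^M(\ell)$. Summing the geometric series over $k\geq 1$ in the $\op{I}_{pk}$ contribution yields
\[
\sum_{k\geq 1}\rho_{\op{I}_{pk}}(\ell)=\frac{(\ell-1)^2}{\ell^2(\ell^p-1)(1-\ell^{-10})},
\]
which is strictly less than $\ell^{-p}$: after cross-multiplication this reduces to the elementary inequality $(1-1/\ell)^2<(1-\ell^{-p})(1-\ell^{-10})$, which holds for every prime $\ell\geq 2$ and every $p\geq 3$. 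An analogous check gives $\rho_{\op{IV}}(\ell)<\ell^{-4}$ and $\rho_{\op{IV}^*}(\ell)<\ell^{-7}$, both of which come down to $1-1/\ell<1-\ell^{-10}$. In particular $\sum_\ell s_\ell$ converges, so $\mathfrak{d}(Z_N(U))\leq\sum_{\ell>N}s_\ell\to 0$ and the family $U$ is admissible.

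Proposition \ref{formula for density} then delivers $\mathfrak{d}(\mathcal{S}_p)=1-\prod_{\ell\neq p}(1-s_\ell)\leq\sum_{\ell\neq p}s_\ell$, strict because more than one $s_\ell$ is positive. For $p\geq 5$ I estimate
\[
\sum_{\ell\neq p}s_\ell<\sum_\ell\ell^{-p}<\sum_{n\geq 2}n^{-p}=\zeta(p)-1,
\]
where the second strict inequality is because composite integers $n\geq 4$ are not counted in the sum over primes. For $p=3$ I split the local bound as $s_\ell\leq\ell^{-3}+\ell^{-4}+\ell^{-7}$ and apply the same estimate term-by-term, producing the claimed $(\zeta(3)-1)+(\zeta(4)-1)+(\zeta(7)-1)$ bound. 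The only real technical hurdle is the elementary verification of the local bounds $\sum_{T\in\mathcal{T}_p}\rho_T(\ell)<\ell^{-p}$ (or the analogous piecewise bounds at $p=3$), with careful attention to the correction factor $(1-\ell^{-10})^{-1}$; after that every step is bookkeeping using Tate's algorithm (via Theorem \ref{thm kod types}) and the Cremona--Sadek local density formulas (Theorem \ref{th cremona sadek}).
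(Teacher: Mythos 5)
Your overall route is the same as the paper's: translate $p\mid c_\ell(E)$ into a list of Kodaira types via Theorem \ref{thm kod types}, feed the Cremona--Sadek local densities of Theorem \ref{th cremona sadek} into Proposition \ref{formula for density}, and bound the resulting sum by $\sum_\ell \ell^{-p}<\zeta(p)-1$ (and termwise by $\ell^{-3}+\ell^{-4}+\ell^{-7}$ when $p=3$). Your local estimates, including the geometric-series computation for $\sum_k\rho_{\op{I}_{pk}}(\ell)$ and the inequality $(1-1/\ell)^2<(1-\ell^{-p})(1-\ell^{-10})$, are correct. However, there is a genuine gap in your verification of admissibility. You assert $\mathfrak{d}(Z_N(U))\leq\sum_{\ell>N}s_\ell$, but the natural density of \eqref{def of density} is only \emph{finitely} subadditive: $Z_N(U)$ is an infinite union $\bigcup_{\ell>N}\{a:a\in U_\ell\}$, and for primes $\ell$ large relative to the height cutoff $x$ the proportion of tuples of height $<x$ lying in $U_\ell$ need not be close to $\mu(U_\ell)$. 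Controlling this tail is precisely the nontrivial content of admissibility; it is where one must use that every type in $\mathcal{T}_p$ forces $\ell^2\mid\Delta(a)$, together with a count of Weierstrass tuples of bounded height whose discriminant is divisible by $\ell^2$ for some large $\ell$. The paper does not reprove this: it imports it from the proof of \cite[Theorem 4.6]{cremona2020local}, noting that $U(\op{I}_{\geq p})$ is contained in the admissible family $U(\op{I}_{\geq 2})$, and that the $p=3$ types $\op{IV},\op{IV}^*$ also satisfy $\ell^2\mid\Delta$. Your family $\{\op{I}_{pk}:k\geq1\}$ is contained in $\op{I}_{\geq p}$, so the same citation would rescue your argument, but as written the admissibility step fails.

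A secondary, more technical point: Proposition \ref{formula for density} is stated for conditions $U_\ell$ that are pullbacks of subsets of $\W(\Z/\ell^M\Z)$ for a single finite $M$, whereas your $U_\ell=\bigcup_{k\geq1}\{\text{type }\op{I}_{pk}\}$ is an infinite union of congruence conditions of unbounded level. The paper sidesteps both this issue and the sharpness you don't actually need by replacing the exact locus with the coarser condition $\op{I}_{\geq p}$ (resp.\ $\op{I}_{\geq 3}$, $\op{IV}$, $\op{IV}^*$ for $p=3$): since $\mathcal{S}_p$ is merely \emph{contained} in the complement of the corresponding $\W_U$, one still gets $\overline{\mathfrak{d}}(\mathcal{S}_p)\leq 1-\mathfrak{d}(\W_U)\leq\sum_{\ell\neq p}\rho_T(\ell)$, which suffices for the stated strict inequalities. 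I would recommend adopting that coarsening and citing the Cremona--Sadek tail estimate explicitly; with those two repairs your proof is complete.
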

\begin{proof}
We prove the result on a case by case basis.
\begin{enumerate}
    \item First, we consider the case when $p\geq 5$. Then, by Theorem \ref{thm kod types} \eqref{c1 thm kod types}, we have that $p|c_{\ell}(E)$ if and only if the Kodaira type $T$ is $\op{I}_{mp}$ for some integer $m\geq 1$. Let $U=U(\op{I}_{\geq p})$ be the datum such that $U_\ell=I_{\geq p}$ at every prime $\ell\neq p$. It follows directly from the proof of \cite[Theorem 4.6]{cremona2020local} that the datum $U(\op{I}_{\geq 2})$ is admissible, and hence, so is $U$. From Lemma \ref{density wut}, we have that
    \[\begin{split}\mathfrak{d}(\W_U)=&\prod_{\ell\neq p} \left(1-(1-\ell^{-10})^{-1}(1-\ell^{-1})\ell^{-p}\right)\\
    \geq & \prod_{\ell\neq p}(1-\ell^{-p})\\
    \geq & 1-\sum_{\ell\neq p} \ell^{-p}.\end{split}\]
    Since $\mathcal{S}_p$ is contained in the complement of $\W_U$, we find that 
    \[\overline{\mathfrak{d}}(\mathcal{S}_p)\leq \sum_{\ell\neq p} \ell^{-p}<\zeta(p)-1.\]
    \item We now consider the case when $p=3$. By Theorem \ref{thm kod types} \eqref{c2 thm kod types}, we have that $3|c_{\ell}(E)$ if and only if the Kodaira type $T$ is $\op{I}_{3m}$ for some integer $m\geq 1$, or, $T=\rm{IV}$ or $T=\rm{IV}^*$. Let $U$ be such that $U_\ell$ is the subset of $\W(\Z_\ell)$ with Kodaira type $\op{I}_{\geq 3}$, $\rm{IV}$ or $\rm{IV}^*$ respectively. Note that $\ell^2$ divides $\Delta$ for such reduction types. Admissibility is a direct consequence of the proof of \cite[Theorem 4.6]{cremona2020local}.
    
    We find that 
    \[\begin{split}\mathfrak{d}(\W_U)\geq 
    &\prod_{\ell\neq p} \left(1-\rho_{\op{I}_{\geq 3}}(\ell)-\rho_{\rm{IV}}(\ell)-\rho_{\rm{IV}^*}(\ell)\right)\\
    \geq 
    &\prod_{\ell\neq p} \left(1-(1-\ell^{-10})^{-1}(1-\ell^{-1})(\ell^{-3}+\ell^{-4}+\ell^{-7})\right)\\
    \geq 
    &\prod_{\ell\neq p} \left(1-(\ell^{-3}+\ell^{-4}+\ell^{-7})\right)\\
    \geq & 1-\sum_{\ell\neq p} \ell^{-3}-\sum_{\ell\neq p} \ell^{-4}-\sum_{\ell\neq p} \ell^{-7}.\\
    > & 1-(\zeta(3)-1)-(\zeta(4)-1)-(\zeta(7)-1).
    \end{split}\]
Thus, we find that 
 \[\overline{\mathfrak{d}}(\mathcal{S}_p)<(\zeta(3)-1)+(\zeta(4)-1)+(\zeta(7)-1).\]
\end{enumerate}
\end{proof}
\begin{definition}\label{def spprime}Denote by $\mathcal{S}_p'$ the set of elliptic curves $E$ for which $p\nmid \Delta(E)$ and $E(\Q_p)[p]\neq 0$. Note that for such elliptic curves and $p>2$, $\widetilde{E}(\F_p)[p]\neq 0$, where $\widetilde{E}$ is the reduction of $E$.
\end{definition}

Let $\Delta\in \Z_{<0}$ with $\Delta\equiv 0,1\mod{4}$, and set
\[B(\Delta):=\left\{ax^2+bxy+cy^2\in \Z[x,y]\mid  a>0, b^2-4ac=\Delta\right\}.\] The group $\op{SL}_2(\Z)$ acts on $\Z[x,y]$, where a matrix $\sigma=\mtx{p}{q}{r}{s}$ sends $x\mapsto (px+qy)$ and $y\mapsto (rx+sy)$. Thus, if $f=ax^2+bxy+cy^2$, the matrix $\sigma$ acts by
\[f\circ \sigma= a(px+qy)^2+b(px+qy)(rx+sy)+c(rx+sy)^2.\] We note that $B(\Delta)$ is stable under the action of $\op{SL}_2(\Z)$ and $B(\Delta)/\op{SL}_2(\Z)$ is finite, see \cite{schoof1987nonsingular} for additional details.
\begin{definition}\label{def hurwitz class number}
The \emph{Hurwitz class number} $H(\Delta)$ is the order of $B(\Delta)/\op{SL}_2(\Z)$.
\end{definition}

Let $d(p)$ be the number of tuples $a\in \W(\F_p)$ such that $E_a(\F_p)[p]\neq 0$. Note that \[\frac{d(p)}{\#\W(\F_p)}=\frac{d(p)}{p^5}.\]

\begin{lemma}
With respect to notation above, $\overline{\mathfrak{d}}(\mathcal{S}_p')\leq \frac{d(p)}{p^5}$.
\end{lemma}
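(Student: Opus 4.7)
The plan is to observe that membership in $\mathcal{S}_p'$ is controlled by the residue of the Weierstrass tuple $a$ modulo $p$, so that the density can be computed by counting residue classes in $\W(\F_p)$ and invoking standard equidistribution in boxes of the form $\{a : |a_i| < x^i\}$.

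First, suppose $a \in \W(\Z)$ gives an elliptic curve $E = E_a \in \mathcal{S}_p'$. The condition $p \nmid \Delta(E_a)$ is itself a congruence condition on $\bar a := a \bmod p$, and it guarantees that $E$ has good reduction at $p$. Since $p$ is odd, the formal group $\hat{E}(p\Z_p)$ has no nontrivial $p$-torsion (the ramification index equals $1 < p-1$), so the reduction map $E(\Q_p)[p] \hookrightarrow \widetilde{E}(\F_p)[p]$ is injective. Thus the assumption $E(\Q_p)[p] \neq 0$ forces $\widetilde{E_{\bar a}}(\F_p)[p] \neq 0$, and the latter is visibly a condition on $\bar a$ alone. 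Define
\[ T_p := \left\{ \bar a \in \W(\F_p) : p \nmid \Delta(E_{\bar a}) \text{ and } E_{\bar a}(\F_p)[p] \neq 0 \right\}. \]
Then $a \in \mathcal{S}_p'$ implies $\bar a \in T_p$, and directly from the definition of $d(p)$ we have $|T_p| \leq d(p)$.

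It remains to carry out the equidistribution count. For each fixed residue class $\bar a \in \W(\F_p)$, the number of integer tuples in the box $\{a \in \Z^5 : |a_i| < x^i \text{ for } i = 1,2,3,4,6\}$ with $a \equiv \bar a \pmod p$ is $2^5 x^{16} p^{-5} + O(x^{15})$ by elementary counting in each coordinate. Summing over $\bar a \in T_p$ and dividing by $\#\{a \in \Z^5 : \op{ht}(a) < x\} = 2^5 x^{16}$ gives
\[ \overline{\mathfrak{d}}(\mathcal{S}_p') \leq \lim_{x \to \infty} \frac{|T_p| \cdot 2^5 x^{16} p^{-5} + O(x^{15})}{2^5 x^{16}} = \frac{|T_p|}{p^5} \leq \frac{d(p)}{p^5}, \]
which is the claimed bound. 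No genuine obstacle arises; the only place calling for a little care is the injectivity of reduction on $p$-torsion, but this is a standard consequence of formal group theory over the unramified base $\Z_p$ with $p$ odd.
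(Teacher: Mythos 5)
Your proof is correct and follows essentially the same route as the paper: you cover $\mathcal{S}_p'$ by the residue classes $\bar a \in \W(\F_p)$ with $E_{\bar a}(\F_p)[p]\neq 0$ (using injectivity of reduction on $p$-torsion, which the paper notes alongside Definition \ref{def spprime}), and each such class has density $p^{-5}$, giving the bound $d(p)/p^5$. The only difference is that you spell out the box-counting and the formal-group argument that the paper leaves implicit.
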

\begin{proof}
Given $a\in \W(\F_p)$, let $\mathcal{S}_{a}$ be the set of $\tilde{a}\in \W(\Z)$ which reduce to $a$. It is easy to see that $\mathfrak{d}(\mathcal{S}_{a})= \frac{1}{p^5}$. Note that $\mathcal{S}_p'$ is contained in the disjoint union $\bigsqcup_{a}\mathcal{S}_{a}$, where $a$ ranges over the tuples in $\W(\F_p)$ such that $E_{a}(\F_p)[p]\neq 0$. The result follows from this.
\end{proof}

\begin{proposition}\label{main prop}
With respect to notation above, we have that
\[\overline{\mathfrak{d}}(\mathcal{S}_p')\leq \begin{cases}\left( \frac{p-1}{2p^2}\right)H(1-4p) & \text{ if }p\geq 7,\\
\left(\frac{p-1}{2p^2}\right)\left(H(1-4p)+H(p^2+1-6p)\right) & \text{ if }p\leq 5.\\
\end{cases}\]
\end{proposition}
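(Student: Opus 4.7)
The plan is to combine the bound $\overline{\mathfrak{d}}(\mathcal{S}_p') \leq d(p)/p^5$ from the preceding lemma with an explicit count of $d(p)$, so it suffices to estimate the number $d(p)$ of tuples $a \in \W(\F_p)$ for which $E_a(\F_p)[p] \neq 0$. The first step is to translate this pointwise condition into a statement about the trace of Frobenius: for a smooth $E_a/\F_p$ one has $\#E_a(\F_p) = p+1-a_p(E_a)$, so $E_a(\F_p)[p] \neq 0$ is equivalent to the congruence $a_p(E_a) \equiv 1 \pmod p$.

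Combining this congruence with the Hasse bound $|a_p(E_a)| \leq 2\sqrt p$, the only integers $t \equiv 1 \pmod p$ that can occur as a trace are $t=1$, which is admissible for every $p$, and $t=1-p$, which lies in the Hasse interval precisely when $p-1 \leq 2\sqrt p$, i.e.\ when $p \leq 5$. This accounts for the case split in the statement of the proposition: for $p \geq 7$ only $t=1$ contributes, whereas for $p \in \{3,5\}$ both $t=1$ and $t=1-p$ are possible. Since $p \nmid t$ in all these cases, we are always in the ordinary regime.

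Next, I would count Weierstrass tuples with prescribed trace via an orbit--stabilizer argument. The group $G$ of admissible coordinate transformations $(x,y) \mapsto (u^2 x + r,\ u^3 y + s u^2 x + w)$ with $u \in \F_p^\times$ and $r,s,w \in \F_p$ has order $p^3(p-1)$ and acts on $\W(\F_p)$ with stabilizer $\op{Aut}(E_a)$ at a tuple $a$. Consequently,
\[
\#\{a \in \W(\F_p) : E_a \text{ smooth},\ a_p(E_a) = t\} = p^3(p-1) \sum_{[E]/\F_p,\ a_p(E)=t} \frac{1}{|\op{Aut}(E)|}.
\]
The Eichler--Deuring mass formula, in the ordinary regime $p \nmid t$ and $t^2 < 4p$, identifies this inner mass with $H(t^2-4p)/2$. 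Substituting $t^2-4p$ equal to $1-4p$ (from $t=1$) and $(1-p)^2-4p = p^2+1-6p$ (from $t=1-p$), summing the contributions from the admissible traces, and dividing by $p^5$ immediately yields the two claimed bounds.

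The main subtlety I expect to navigate is reconciling the normalization of Definition \ref{def hurwitz class number}---an unweighted count of $\op{SL}_2(\Z)$--orbits on $B(\Delta)$---with the mass formula, which is typically stated with weights. For the discriminants $1-4p$ and $p^2+1-6p$ that arise here (all satisfying $\Delta \neq -3,-4$ when $p \geq 3$), no form has exceptional automorphisms in $\op{PSL}_2(\Z)$, so the paper's $H$ coincides with the class number appearing in the mass formula; the factor of $1/2$ in the final bound then arises cleanly from $|\op{Aut}(E)| = 2$ for a generic ordinary elliptic curve. Once this bookkeeping is fixed, the remaining steps are routine. Tuples with singular $E_a$ (where $\Delta(a) = 0$ in $\F_p$) form a Zariski closed subset of $\W(\F_p)$ and, if counted at all, only improve the upper bound, so they can be safely ignored.
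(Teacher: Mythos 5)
Your argument is essentially the paper's: both reduce to the lemma $\overline{\mathfrak{d}}(\mathcal{S}_p')\leq d(p)/p^5$ and then bound $d(p)$ by (number of isomorphism classes with $p$-torsion) $\times\, p^3(p-1)/2$; the paper simply cites \cite[Corollary 3.11]{ray2021arithmetic} for the class count that you rederive from $a_p\equiv 1\pmod p$, the Hasse bound, and Deuring's theorem. One side-claim in your write-up is false, though harmlessly so: for $\Delta=1-4p$ it can happen that $\Delta/f^2=-3$ (e.g.\ $p=7$, $\Delta=-27$, with the imprimitive form $3(x^2+xy+y^2)$), corresponding to ordinary $j=0$ curves of trace $1$ with $|\op{Aut}(E)|=6$; so the paper's unweighted $H$ does \emph{not} always agree with the weighted mass $\sum 1/|\op{Aut}(E)|$. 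Since $|\op{Aut}(E)|\geq 2$ always, orbit--stabilizer still gives $\#\{a: a_p(E_a)=t\}=p^3(p-1)\sum 1/|\op{Aut}(E)|\leq p^3\frac{p-1}{2}\,\#\{[E]: a_p=t\}$, which is exactly the inequality needed, so your proof of the stated upper bound stands; only your claimed equality does not.
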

\begin{proof}
The number of isomorphism classes of elliptic curves $E$ over $\F_p$ such that $E(\F_p)[p]\neq 0$ is \[\begin{cases}H(1-4p) & \text{ if }p\geq 7,\\
H(1-4p)+H(p^2+1-6p) & \text{ if }p\leq 5,\\
\end{cases}\]
see \cite[Corollary 3.11]{ray2021arithmetic} for additional details.
Given $a\in \W(\F_p)$ and associated integral Weierstrass model $E_a$, then, after a transformation $(X,Y)\mapsto (X+r, Y+sX+t)$, we obtain an elliptic curve with a short Weierstrass equation. Note that two elliptic curves 
\[\begin{split} & E_{c,d}: Y^2=X^3+cX+d\\
& E_{c',d'}: Y^2=X^3+c'X+d'\end{split}\]are isomorphic over $\F_p$ if there exists $s\in \F_p^\times$ such that \[c'=s^4 c\text{ and }d'=s^6 d.\]It is thus easy to see that the number of Weierstrass equations in a given isomorphism class is at most  $p^3\left(\frac{p-1}{2}\right)$. Thus, we find that 
\[d(p)\leq \begin{cases}p^3\left(\frac{p-1}{2}\right)H(1-4p) & \text{ if }p\geq 7,\\
p^3\left(\frac{p-1}{2}\right)\left(H(1-4p)+H(p^2+1-6p) \right) & \text{ if }p\leq 5.\\
\end{cases}\]
\end{proof}
\section{Results for a fixed prime and varying elliptic curve}\label{s 4}
\par In this section we prove our main results. Throughout, we fix an odd prime number $p$. Given a set of rational elliptic curves $\mathcal{S}$, we obtain a subset of $\W(\Z)$ consisting of all $a$ such that $E_a\in \mathcal{S}$. By abuse of notation, we shall refer to this set as $\mathcal{S}$ as well. The density $\mathfrak{d}(\mathcal{S})$ is defined as in \eqref{def of density}. Recall that the upper and lower densities, denoted $\overline{\mathfrak{d}}(\mathcal{S})$ and $\underline{\mathfrak{d}}(\mathcal{S})$, are always defined even if $\mathfrak{d}(\mathcal{S})$ need not be.  Let $\mathcal{E}_p$ be the set of elliptic curves $E_{/\Q}$ such that $\op{Hom}_G(\op{Cl}_K, E[p])\neq 0$. Note that for $E\in \mathcal{E}_p$, we have in particular that $p|\#\op{Cl}_K$.
\par Recall from Definition \ref{def of sp} that $\mathcal{S}_{p}$ is the set of elliptic curves $E_{/\Q}$ such that $p|c_\ell(E)$ for some prime $\ell\neq p$. Recall from Definition \ref{def spprime} that $\mathcal{S}_p'$ is the set of elliptic curves $E$ for which $p\nmid \Delta(E)$ and $E(\Q_p)[p]\neq 0$. Denote by $\mathcal{S}_p''$ the set of elliptic curves $E_{/\Q}$ with bad reduction at $p$. Let $\mathfrak{T}_p$ be the elliptic curves $E$ such that $\Sh(E/\Q)[p^\infty]$ is finite and $\Sh(E/\Q)[p]\neq 0$.

\begin{proposition}\label{prop 4.1}
Let $p$ be an odd prime. Then, we have that \[\underline{\mathfrak{d}}(\mathcal{E}_p)\geq \underline{\mathfrak{d}}\left(\mathfrak{T}_p\backslash (\mathcal{S}_p\cup \mathcal{S}_p'\cup \mathcal{S}_p'')\right).\]
\end{proposition}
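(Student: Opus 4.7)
The plan is to establish the set-theoretic inclusion
\[\mathfrak{T}_p\setminus(\mathcal{S}_p\cup\mathcal{S}_p'\cup\mathcal{S}_p'')\subseteq \mathcal{E}_p,\]
whereupon monotonicity of $\underline{\mathfrak{d}}$ on subsets of $\W(\Z)$ immediately yields the stated density inequality directly from the definition in \eqref{def of density}.

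To verify the inclusion, I would fix an elliptic curve $E_{/\Q}$ in the left-hand side and translate each membership condition into the running hypotheses of Theorem~\ref{psthm1}. From $E\notin\mathcal{S}_p''$, $E$ has good reduction at $p$, so $p\nmid \Delta(E)$ for the minimal model. Coupled with $E\notin\mathcal{S}_p'$---which, by Definition~\ref{def spprime}, asserts that either $p\mid\Delta(E)$ or $E(\Q_p)[p]=0$---this forces $E(\Q_p)[p]=0$. From $E\notin\mathcal{S}_p$ we obtain $p\nmid c_\ell(E)$, equivalently $c_\ell^{(p)}(E)=1$, for every prime $\ell\neq p$. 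Together with the good reduction at $p$ required by Theorem~\ref{psthm1}, these are exactly the hypotheses of that theorem.

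Theorem~\ref{psthm1} then yields
\[\op{dim}_{\F_p}\op{Hom}_G\!\left(\op{Cl}_K/p\op{Cl}_K,\,E[p]\right)\geq \op{rank} E(\Q)+\dim_{\F_p}\Sh(E/\Q)[p]-1.\]
Since $E\in\mathfrak{T}_p$ gives $\Sh(E/\Q)[p]\neq 0$, and the Cassels-Tate pairing forces $\dim_{\F_p}\Sh(E/\Q)[p]$ to be even (as recalled in Section~\ref{s 2}), this dimension is at least $2$. Hence the right-hand side is $\geq 1$, and $\op{Hom}_G(\op{Cl}_K/p\op{Cl}_K,E[p])\neq 0$. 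Because $E[p]$ is annihilated by $p$, every $G$-equivariant map $\op{Cl}_K\to E[p]$ factors through $\op{Cl}_K/p\op{Cl}_K$, so $\op{Hom}_G(\op{Cl}_K,E[p])\neq 0$ as well; that is, $E\in\mathcal{E}_p$.

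The argument is largely bookkeeping, and I do not foresee any genuine obstacle. The one place where a nontrivial input is truly used is the Cassels-Tate parity: without the resulting lower bound $\dim_{\F_p}\Sh(E/\Q)[p]\geq 2$, the Prasad-Shekhar inequality would only give $\dim\op{Hom}\geq \op{rank} E(\Q)$, which is vacuous in the rank-zero setting---precisely the case in which the $\Sh$-based strategy is needed.
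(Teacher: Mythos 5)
Your overall strategy---reduce to the set-theoretic inclusion $\mathfrak{T}_p\setminus(\mathcal{S}_p\cup\mathcal{S}_p'\cup\mathcal{S}_p'')\subseteq\mathcal{E}_p$ and then invoke monotonicity of the lower density---is the same as the paper's, and your translation of the membership conditions into hypotheses (1) and (2) of Theorem~\ref{psthm1} is correct. Your use of the Cassels--Tate parity to get $\dim_{\F_p}\Sh(E/\Q)[p]\geq 2$ is also fine; the paper instead quotes the ``in particular'' clause of Theorem~\ref{psthm1}, which encapsulates the same point.

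There is, however, a genuine gap: Theorem~\ref{psthm1} (and the Prasad--Shekhar results behind it) is stated under the standing assumption of Section~\ref{s 2} that the residual representation $\bar{\rho}$ on $E[p]$ is irreducible; that assumption is also what gives $E(\Q)[p]=0$ and hence $\dim_{\F_p}\op{Sel}_p(E/\Q)=\op{rank}E(\Q)+\dim_{\F_p}\Sh(E/\Q)[p]$. Membership in $\mathfrak{T}_p\setminus(\mathcal{S}_p\cup\mathcal{S}_p'\cup\mathcal{S}_p'')$ does not guarantee irreducibility, so the inclusion you assert is not established for curves with reducible mod-$p$ representation, and the density inequality does not follow from a literal subset relation. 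The paper closes exactly this hole by invoking Duke's theorem that the mod-$p$ representation is irreducible for $100\%$ of elliptic curves, so the reducible locus has density zero and discarding it does not change the lower density of either side. You need that additional density-zero argument (or some other treatment of the reducible locus) to complete the proof; otherwise the write-up is essentially the paper's argument.
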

\begin{proof}
Suppose $E$ is an elliptic curve in $\mathfrak{T}_p\backslash (\mathcal{S}_p\cup \mathcal{S}_p'\cup \mathcal{S}_p'')$, then, since $E\in \mathfrak{T}_p$, we have that $\Sh(E/\Q)[p]\neq 0$, and since $E\notin \mathcal{S}_p\cup \mathcal{S}_p'\cup \mathcal{S}_p''$, we have that $p\nmid c_\ell(E)$ for all primes $\ell\neq p$, and $E(\Q_p)[p]=0$. So long as  the residual representation on $E[p]$ is irreducible it follows from Theorem \ref{psthm1} that $\op{Hom}_G(\op{Cl}_K, E[p])\neq 0$. As a result, $E$ is contained in $\mathcal{E}_p$. By a well known result of Duke \cite{duke1997elliptic}, the residual representation on $E[p]$ is irreducible for $100\%$ of elliptic curves, and the result follows.
\end{proof}
Throughout, we shall assume that for any elliptic curve $E_{/\Q}$, the $p$-primary part of the Tate-Shafarevich group $\Sh(E/\Q)[p^\infty]$ is finite.
The density of elliptic curves $E_{/\Q}$ such that $\Sh(E/\Q)[p^\infty]\neq 0$ was studied by Delaunay \cite{delaunay2007heuristics}.
\begin{conjecture}[Delaunay]\label{conjecture delaunay}
With respect to notation above, \[\underline{\mathfrak{d}}(\mathfrak{T}_p)\geq 1-\prod_{i\geq 1}\left(1-p^{-(2i-1)}\right)> p^{-1}+p^{-3}-p^{-4}.\] Moreover, given a set $\mathcal{E}$ of elliptic curves defined by local congruence conditions, 
\[\underline{\mathfrak{d}}(\mathfrak{T}_p\cap \mathcal{E})=\underline{\mathfrak{d}}(\mathfrak{T}_p)\mathfrak{d}(\mathcal{E}).\]
\end{conjecture}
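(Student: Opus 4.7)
Since the statement is a conjecture attributed to Delaunay and Poonen-Rains, the plan is to sketch the heuristic justification rather than a rigorous derivation. The central input is the random matrix model for $\Sh$: after ordering elliptic curves $E_{/\Q}$ by height, one models $\Sh(E/\Q)[p^\infty]$ as the cokernel of a random alternating matrix over $\Z_p$ in a suitable large-dimension limit, with each isomorphism class weighted inversely by the size of its automorphism group. The symplectic Cohen-Lenstra mass formula then predicts that the probability such a cokernel has trivial $p$-part equals $\prod_{i \geq 1}(1 - p^{-(2i-1)})$, so the conjectural proportion of elliptic curves with $\Sh[p] \neq 0$ is $1 - \prod_{i \geq 1}(1 - p^{-(2i-1)})$. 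For positive-rank curves, the Poonen-Rains refinement only enlarges this probability, so it suffices to use the rank-$0$ distribution to produce the stated lower bound on $\underline{\mathfrak{d}}(\mathfrak{T}_p)$.

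The elementary numerical bound is quick to verify: since every factor in the infinite product is strictly positive and strictly less than one, truncating after the first two factors yields
\[\prod_{i \geq 1}\bigl(1 - p^{-(2i-1)}\bigr) < \bigl(1 - p^{-1}\bigr)\bigl(1 - p^{-3}\bigr) = 1 - p^{-1} - p^{-3} + p^{-4},\]
and rearranging produces $1 - \prod_{i \geq 1}(1 - p^{-(2i-1)}) > p^{-1} + p^{-3} - p^{-4}$.

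For the independence clause, the plan is to invoke the principle underlying the Delaunay--Poonen-Rains model that the distribution of $\Sh[p^\infty]$ depends only on $p$ and on $\operatorname{rank} E(\Q)$, and not on Kodaira type or other local reduction data. In particular, within each congruence class of Weierstrass coefficients one expects the same conjectural proportion of curves to lie in $\mathfrak{T}_p$, which yields the multiplicative factorization $\underline{\mathfrak{d}}(\mathfrak{T}_p \cap \mathcal{E}) = \underline{\mathfrak{d}}(\mathfrak{T}_p)\,\mathfrak{d}(\mathcal{E})$. The main obstacle is, of course, that the entire derivation depends on the truth of the Delaunay--Poonen-Rains heuristic, for which no unconditional proof is known; the independence clause is particularly delicate, as it requires the random matrix model to remain valid uniformly within each congruence class cut out by local conditions, and unconditional evidence for such uniformity is currently limited to small primes and partial results on average ranks.
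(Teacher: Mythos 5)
This statement is a conjecture in the paper, attributed to Delaunay and Poonen--Rains, and the paper offers no proof of it: it is only assumed as a hypothesis in Theorem \ref{main thm} and Corollary \ref{main cor new}. Your treatment is therefore appropriate and matches the paper's stance --- you correctly identify the random-matrix/Cohen--Lenstra heuristic as the source of the prediction, and your verification of the one genuinely provable piece, namely
\[\prod_{i\geq 1}\left(1-p^{-(2i-1)}\right)<\left(1-p^{-1}\right)\left(1-p^{-3}\right)=1-p^{-1}-p^{-3}+p^{-4},\]
hence $1-\prod_{i\geq 1}\left(1-p^{-(2i-1)}\right)>p^{-1}+p^{-3}-p^{-4}$, is correct.
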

\begin{remark}
For the purposes of this article, we shall use only the second lower bound predicted by the above conjecture: that is, we shall assume that 
\[\underline{\mathfrak{d}}(\mathfrak{T}_p\cap \mathcal{E}) >\left( p^{-1}+p^{-3}-p^{-4} \right)\mathfrak{d}(\mathcal{E}),\] where $\mathcal{E}$ is taken to be the complement of $\mathcal{S}_p\cup \mathcal{S}_p'\cup \mathcal{S}_p''$.
\end{remark}
\begin{theorem}\label{main thm}
Let $p$ be an odd prime, and assume the above conjecture. Then, we have that 
\[\underline{\mathfrak{d}}(\mathcal{E}_p)>(p^{-1}+p^{-3}-p^{-4})(1-p^{-1}-\mathfrak{d}_p-\mathfrak{d}_p'),\] where
\[\mathfrak{d}_p=\begin{cases}
\zeta(p)-1 & \text{ if }p\geq 5,\\
(\zeta(3)-1)+(\zeta(4)-1)+(\zeta(7)-1)& \text{ if }p=3,\\
\end{cases}\]
\[\mathfrak{d}_p'=\begin{cases}\left( \frac{p-1}{2p^2}\right)H(1-4p) & \text{ if }p\geq 7,\\
\left(\frac{p-1}{2p^2}\right)\left(H(1-4p)+H(p^2+1-6p)\right) & \text{ if }p\leq 5.\\
\end{cases}\]
\end{theorem}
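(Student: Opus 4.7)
The plan is to string together Proposition~\ref{prop 4.1}, the Delaunay-Poonen-Rains heuristic in the form recorded in the remark after Conjecture~\ref{conjecture delaunay}, and the explicit density bounds compiled in Section~\ref{s 3}.

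By Proposition~\ref{prop 4.1},
$$\underline{\mathfrak{d}}(\mathcal{E}_p) \geq \underline{\mathfrak{d}}\bigl(\mathfrak{T}_p \setminus (\mathcal{S}_p \cup \mathcal{S}_p' \cup \mathcal{S}_p'')\bigr).$$
Setting $\mathcal{E} := \W(\Z) \setminus (\mathcal{S}_p \cup \mathcal{S}_p' \cup \mathcal{S}_p'')$, the right-hand side equals $\underline{\mathfrak{d}}(\mathfrak{T}_p \cap \mathcal{E})$. The set $\mathcal{E}$ is defined by local congruence conditions: $\mathcal{S}_p$ is cut out by Tamagawa divisibility at primes $\ell \neq p$, which forms an admissible family by the proof of Theorem~\ref{technical thm} (resting on the admissibility of $U(\op{I}_{\geq 2})$ from \cite{cremona2020local}), while $\mathcal{S}_p'$ and $\mathcal{S}_p''$ are each cut out by a single local condition at $p$. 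Applying the form of Conjecture~\ref{conjecture delaunay} noted in the remark then gives
$$\underline{\mathfrak{d}}(\mathfrak{T}_p \cap \mathcal{E}) > (p^{-1}+p^{-3}-p^{-4})\,\mathfrak{d}(\mathcal{E}).$$

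It remains to bound $\mathfrak{d}(\mathcal{E})$ from below. The union bound gives
$$\mathfrak{d}(\mathcal{E}) \geq 1 - \overline{\mathfrak{d}}(\mathcal{S}_p) - \overline{\mathfrak{d}}(\mathcal{S}_p') - \overline{\mathfrak{d}}(\mathcal{S}_p'').$$
Theorem~\ref{technical thm} yields $\overline{\mathfrak{d}}(\mathcal{S}_p) < \mathfrak{d}_p$, Proposition~\ref{main prop} yields $\overline{\mathfrak{d}}(\mathcal{S}_p') \leq \mathfrak{d}_p'$, and for the third term I would observe that $E_a$ has bad reduction at $p$ precisely when its Kodaira type at $p$ differs from $\op{I}_0$, so that Theorem~\ref{th cremona sadek} together with $\rho_T(\ell) = (1-\ell^{-10})^{-1}\rho_T^M(\ell)$ gives
$$\mathfrak{d}(\mathcal{S}_p'') = 1 - \frac{1-p^{-1}}{1-p^{-10}} = \frac{p^{-1}-p^{-10}}{1-p^{-10}} < p^{-1},$$
where the final inequality reduces (with $q = p^{-1}$) to $q^{11} < q^{10}$. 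Combining these three estimates gives $\mathfrak{d}(\mathcal{E}) > 1 - p^{-1} - \mathfrak{d}_p - \mathfrak{d}_p'$, whence
$$\underline{\mathfrak{d}}(\mathcal{E}_p) \geq \underline{\mathfrak{d}}(\mathfrak{T}_p \cap \mathcal{E}) > (p^{-1}+p^{-3}-p^{-4})(1-p^{-1}-\mathfrak{d}_p-\mathfrak{d}_p'),$$
as desired.

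The step requiring the most care is applying Conjecture~\ref{conjecture delaunay} to $\mathcal{E}$, since the defining Tamagawa condition involves an infinite set of primes $\ell \neq p$ and so does not reduce to a finite-level congruence. The relevant admissibility is exactly what the proof of Theorem~\ref{technical thm} was designed to establish, so the verification amounts to bookkeeping rather than new work; the remainder of the argument is a direct combination of the explicit bounds compiled in Section~\ref{s 3}.
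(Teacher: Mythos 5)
Your proposal is correct and follows essentially the same route as the paper: Proposition~\ref{prop 4.1} to reduce to $\mathfrak{T}_p\cap\mathcal{E}$, the remark form of Conjecture~\ref{conjecture delaunay}, and the bounds from Theorem~\ref{technical thm}, Proposition~\ref{main prop}, and the Kodaira-type densities for the three exceptional sets. Your treatment is in fact slightly more careful than the paper's in two spots — you use an explicit union bound on upper densities where the paper writes equalities, and you compute $\mathfrak{d}(\mathcal{S}_p'')=\frac{p^{-1}-p^{-10}}{1-p^{-10}}<p^{-1}$ exactly rather than quoting it as $1/p$ — but these are refinements of the same argument, not a different one.
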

\begin{proof}
Let $\mathcal{E}$ be the set of elliptic curves in the complement of $\mathcal{S}_p\cup \mathcal{S}_p'\cup \mathcal{S}_p''$. We have from Theorem \ref{technical thm} (resp. Proposition \ref{main prop}) that $\mathfrak{d}(\mathcal{S}_p)=\mathfrak{d}_p$ (resp. $\mathfrak{d}(\mathcal{S}_p')=\mathfrak{d}_p'$). Furthermore, we have from Theorem \ref{th cremona sadek} and Proposition \ref{formula for density} that $\mathfrak{d}(\mathcal{S}_p'')=\frac{1}{p}$. From Proposition \ref{prop 4.1}, we have that $\underline{\mathfrak{d}}(\mathcal{E}_p)\geq \underline{\mathfrak{d}}\left(\mathfrak{T}_p\backslash (\mathcal{S}_p\cup \mathcal{S}_p'\cup \mathcal{S}_p'')\right)$. According to our assumption,
\[\begin{split}
&\underline{\mathfrak{d}}\left(\mathfrak{T}_p\backslash (\mathcal{S}_p\cup \mathcal{S}_p'\cup \mathcal{S}_p'')\right)=\underline{\mathfrak{d}}(\mathfrak{T}_p\cap \mathcal{E})\\
\geq & \left(1-\prod_{i\geq 1}\left(1-p^{-(2i-1)}\right)\right)\mathfrak{d}(\mathcal{E})\\
> &\left( p^{-1}+p^{-3}-p^{-4} \right)\mathfrak{d}(\mathcal{E})\\
=& \left( p^{-1}+p^{-3}-p^{-4} \right) \left(1-p^{-1}-\mathfrak{d}_p-\mathfrak{d}_p'\right),\end{split}\] and the result follows.

\end{proof}
\begin{corollary}\label{main cor new}
Assume Conjecture~\ref{conjecture delaunay}.  Then
 \[\underline{\mathfrak{d}}(\mathcal{E}_p)\geq  p^{-1}+O(p^{-3/2+\epsilon}).\] In other words, for any choice of $\epsilon>0$ there is a constant $C>0$, depending on $\epsilon$ and independent of $p$, such that \[\underline{\mathfrak{d}}(\mathcal{E}_p)\geq p^{-1}-Cp^{-3/2+\epsilon}.\]
\end{corollary}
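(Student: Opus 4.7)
The plan is to start from the explicit lower bound
\[\underline{\mathfrak{d}}(\mathcal{E}_p) > (p^{-1}+p^{-3}-p^{-4})(1-p^{-1}-\mathfrak{d}_p-\mathfrak{d}_p')\]
furnished by Theorem~\ref{main thm} and simply expand the product. Multiplying out gives
\[\underline{\mathfrak{d}}(\mathcal{E}_p) > p^{-1} - p^{-2} - p^{-1}\mathfrak{d}_p - p^{-1}\mathfrak{d}_p' + O(p^{-3}),\]
so the task reduces to showing that both $p^{-1}\mathfrak{d}_p$ and $p^{-1}\mathfrak{d}_p'$ are of size $O(p^{-3/2+\epsilon})$; the explicit contributions $-p^{-2}$ and $O(p^{-3})$ are already of strictly smaller order than $p^{-3/2+\epsilon}$ for any $\epsilon > 0$.

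First I would dispatch $\mathfrak{d}_p$. For $p \geq 5$ one has $\mathfrak{d}_p = \zeta(p)-1 = \sum_{n \geq 2} n^{-p}$, and a crude estimate such as $\mathfrak{d}_p \leq 2^{-p} + \int_{2}^{\infty} x^{-p}\,dx = O(2^{-p})$ shows that this decays exponentially in $p$. Consequently $p^{-1}\mathfrak{d}_p$ is negligible against any polynomial error, and in particular is $O(p^{-3/2+\epsilon})$.

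The substantive step is to control $\mathfrak{d}_p' = \frac{p-1}{2p^2}H(1-4p)$, which is the only formula relevant for $p \geq 7$. Here I would invoke the classical estimate
\[H(-D) \ll_\epsilon D^{1/2+\epsilon},\]
a standard consequence of Dirichlet's class number formula $h(-D) = \sqrt{D}\,L(1,\chi_D)/\pi + O(1)$ together with the convexity bound $L(1,\chi_D) \ll \log D$, combined with the fact that Hurwitz class numbers differ from ordinary class numbers only by a bounded correction from forms with nontrivial automorphism. Taking $D = 4p-1$ gives $H(1-4p) = O(p^{1/2+\epsilon})$, from which
\[\mathfrak{d}_p' = O(p^{-1/2+\epsilon}), \qquad p^{-1}\mathfrak{d}_p' = O(p^{-3/2+\epsilon}).\]
This is precisely the dominant error in the expansion, and assembling the three estimates yields the claimed bound.

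There is no real obstacle here: the corollary is a bookkeeping exercise once the Hurwitz class number estimate is in hand. Conceptually, the content of the statement is merely that $\mathfrak{d}_p'$, the only polynomially-decaying error in Theorem~\ref{main thm}, decays at least like $p^{-1/2+\epsilon}$; this pins the final exponent at $-3/2+\epsilon$. (One could even sharpen the exponent slightly by invoking Siegel's theorem $L(1,\chi_D) \gg_\epsilon D^{-\epsilon}$ combined with subconvexity, but this is unnecessary for the stated result.)
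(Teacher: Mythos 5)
Your proposal is correct and follows essentially the same route as the paper: expand the product from Theorem~\ref{main thm}, note that $\mathfrak{d}_p=O(2^{-p})$ by the same integral comparison, and reduce the matter to the class number bound $H(1-4p)=O(p^{1/2+\epsilon})$, which gives $p^{-1}\mathfrak{d}_p'=O(p^{-3/2+\epsilon})$. The only cosmetic difference is the source of that bound --- you derive it from Dirichlet's class number formula with $L(1,\chi_D)\ll \log D$, while the paper cites Lenstra's explicit estimate $H(1-4p)=O(p^{1/2}\log p(\log\log p)^2)$ --- but these are the same underlying fact.
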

\begin{proof}
According to Theorem \ref{main thm}, we have that 
\[\begin{split} \underline{\mathfrak{d}}(\mathcal{E}_p)> & (p^{-1}+p^{-3}-p^{-4})(1-p^{-1}-\mathfrak{d}_p-\mathfrak{d}_p')\\
> & p^{-1}-p^{-1}\left(p^{-1}+\mathfrak{d}_p+\mathfrak{d}_p'\right).
\end{split}\]

We have that
 \[\zeta(p)-1=2^{-p}+\sum_{n\geq 3} n^{-p}< 2^{-p}+\int_{2}^\infty x^{-p}dx=2^{-p}\left(\frac{p+1}{p-1}\right),\] and hence, \[\mathfrak{d}_p=O(2^{-p})=O(p^{-1/2+\epsilon}).\]
 On the other hand, it follows from known results that \[\mathfrak{d}_p'=O\left(p^{-1/2}\op{log}p(\op{log} \op{log} p)^2\right)=O(p^{-1/2+\epsilon}),\] (see \cite[Proposition 1.9]{lenstra1987factoring}) and the result follows.
\end{proof}

\section{Results for a fixed elliptic curve and varying prime}\label{s 5}
\par In this section we prove results for a fixed elliptic curve $E_{/\Q}$ and varying prime $p$. We shall assume throughout that $E$ does not have complex multiplication, and that the Tate-Shafarevich group $\Sh(E/\Q)$ is finite. Given a prime $p$, we let $K_p=\Q(E[p])$. We make a number of observations with regard to the variation of $\op{Cl}_{K_p}/p\op{Cl}_{K_p}$, where $p$ varies over all primes $p$.

\begin{theorem}\label{th 5.1}
Let $E_{/\Q}$ be an elliptic curve, assume that the following conditions are satisfied, 
\begin{enumerate}
    \item $\Sh(E/\Q)$ is finite, 
    \item $\op{rank} E(\Q)\leq 1$, 
    \item $E$ does not have complex multiplication. 
\end{enumerate}
Then, for $100\%$ of primes $p$, we have that $\op{Hom}_G(\op{Cl}_{K_p}/p\op{Cl}_{K_p}, E[p])=0$.
\end{theorem}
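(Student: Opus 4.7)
The plan is to apply Prasad--Shekhar's upper bound (Theorem~\ref{psth2}), which states that whenever the hypotheses of Theorem~\ref{psthm1} are satisfied at $p$,
\[\dim_{\F_p} \op{Hom}_G(\op{Cl}_{K_p}/p\op{Cl}_{K_p}, E[p]) \;\leq\; \op{rank} E(\Q) + \dim_{\F_p}\Sh(E/\Q)[p] - 1 + \#\mathcal{I}_p(E).\]
Under our hypotheses I will show that for $100\%$ of primes $p$ the right-hand side is $\leq 0$: the first term is at most $1$, the second and fourth vanish, and the $-1$ absorbs the first. This forces the Hom to be zero.

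Most of the required vanishings are cheap and exclude only finitely many primes. The set of primes of bad reduction is finite, and for each bad $\ell$ the Tamagawa number $c_\ell(E)$ is a fixed integer with only finitely many prime divisors, so $p\nmid c_\ell(E)$ for all $\ell\neq p$ holds outside a finite exceptional set. For each prime $\ell$ of multiplicative reduction, membership $\ell\in\mathcal{I}_p(E)$ forces $p$ to divide either $v_\ell(q_E)$ or $\ell-1$ (or the analogous quantity at non-split primes), which again eliminates only finitely many $p$ per $\ell$; hence $\mathcal{I}_p(E)=\emptyset$ for all but finitely many $p$. Finiteness of $\Sh(E/\Q)$ gives $\Sh(E/\Q)[p]=0$ for all but finitely many $p$. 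Finally, since $E$ has no complex multiplication, Serre's open image theorem guarantees that $\bar\rho$ is surjective, hence irreducible, for all but finitely many $p$, verifying the running irreducibility hypothesis.

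The one genuinely delicate condition, and the main obstacle, is $E(\Q_p)[p]=0$. For $p\geq 5$ of good reduction, the formal-group $p$-torsion over $\Q_p$ is trivial (since $p$ is odd), so $E(\Q_p)[p]$ injects into $\widetilde{E}(\F_p)[p]$, and the latter is nonzero iff $p\mid\#\widetilde{E}(\F_p)=p+1-a_p$, i.e.\ $a_p\equiv 1\pmod{p}$. The Hasse bound $|a_p|\leq 2\sqrt{p}<p$ forces this congruence to be the equality $a_p=1$. Thus the entire theorem reduces to showing that for non-CM $E$ the set $\{p : a_p(E)=1\}$ has natural density zero. This is a theorem of Serre deduced from openness of the adelic image: for any fixed integer $t$, $\#\{p\leq x:a_p(E)=t\}=o(\pi(x))$, with effective power-saving refinements available. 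Combining this density-zero input with the finite exclusions above, for a set of primes $p$ of density $1$ all the hypotheses of Theorem~\ref{psth2} hold and its right-hand side is at most $1+0-1+0=0$, yielding $\op{Hom}_G(\op{Cl}_{K_p}/p\op{Cl}_{K_p},E[p])=0$.
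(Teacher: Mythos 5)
Your proposal is correct and follows essentially the same route as the paper: both invoke the Prasad--Shekhar upper bound (Theorem~\ref{psth2}), dispose of all hypotheses except $E(\Q_p)[p]=0$ by excluding finitely many primes, and then handle the remaining condition via the formal-group injection $E(\Q_p)[p]\hookrightarrow \widetilde{E}(\F_p)[p]$ together with the density-zero statement for anomalous primes of a non-CM curve (your citation of Serre's $a_p=t$ result is the same input the paper attributes to Murty).
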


\begin{proof}
Note that for all but finitely many primes $p$, 
\begin{enumerate}
    \item $E$ has good reduction at $p$,
    \item $E(\Q_\ell)[p]=0$ for all primes $\ell$ at which $E$ has bad reduction,
    \item $\Sh(E/\Q)[p]=0$,
    \item the representation on $E[p]$ is irreducible,
    \item $c_\ell^{(p)}(E)=1$ for all primes $\ell$ at which $E$ has bad reduction.
\end{enumerate}
Thus, it follows from Theorem \ref{psth2}, that we have that $\op{Hom}_G(\op{Cl}_{K_p}/p\op{Cl}_{K_p}, E[p])=0$ provided $E(\Q_p)[p]=0$. Let $\widetilde{E}$ be the reduction of $E$ over $\F_p$. The formal group $\widehat{E}(\Z_p)$ of $E$ at $p$ has no nontrivial $p$-torsion. Thus, if $p$ is a prime of good reduction, the natural map $E(\Q_p)\rightarrow \widetilde{E}(\F_p)$ induces an injection 
\[E(\Q_p)[p]\rightarrow \widetilde{E}(\F_p)[p].\] A prime $p$ at which $E$ has good reduction is called an \emph{anomalous prime} if $\widetilde{E}(\F_p)[p]\neq 0$. It is well known that for a non-CM elliptic curve, $100\%$ of primes are non-anomalous (see for instance \cite{murty1997modular}). Thus, $E(\Q_p)[p]=0$ for $100\%$ of primes $p$, and thus the result follows.
\end{proof}

\begin{theorem}\label{th 5.2}
Let $E_{/\Q}$ be an elliptic curve, assume that the following conditions are satisfied, 
\begin{enumerate}
    \item $\Sh(E/Q)$ is finite,
    \item $\op{rank} E(\Q)\geq 2$, 
    \item $E$ does not have complex multiplication. 
\end{enumerate}
Then, for $100\%$ of primes $p$, we have that \[\op{dim}\op{Hom}_G(\op{Cl}_{K_p}/p\op{Cl}_{K_p}, E[p])\geq \op{rank} E(\Q)-1.\]
\end{theorem}
\begin{proof}
It follows from the proof of Theorem \ref{th 5.1} that for $100\%$ of primes $p$, all of the following conditions are satisfied:
\begin{enumerate}
    \item $E$ has good reduction at $p$,
    \item the representation on $E[p]$ is irreducible,
    \item $c_\ell^{(p)}(E)=1$ for all primes $\ell$ at which $E$ has bad reduction,
    \item $E(\Q_p)[p]=0$.
\end{enumerate}
It follows from Theorem \ref{psthm1} that \[\op{dim}\op{Hom}_G(\op{Cl}_{K_p}/p\op{Cl}_{K_p}, E[p])\geq \op{rank} E(\Q)-1.\]
\end{proof}

\section{Computational results}\label{s 6}

In this section we present some computations of $\op{Cl}_{K}/3\op{Cl}_K$, where $K=\Q(E[3])$, for certain families of elliptic curves $E$. Due to the difficulty of computing statistics for larger extensions, we primarily restricted ourselves to the case 
that $\op{Gal}(K/\Q)\subset \op{GL}_2(\F_3)$
is the normalizer of a split Cartan subgroup. This is the smallest irreducible subgroup of $\op{GL}_2(\F_3)$.  We include also a few additional calculations where the Galois group is the normalizer of a non-split Cartan subgroup.

We note that the hypotheses of \cite{prasad2021relating} are often violated in these cases. Specifically, a majority of such $E$ have bad reduction at $3$. In addition, all ramification at $3$ is tame as the Galois group has order prime to $3$: thus any such $E$ for which $a_3(E) \equiv 1 \pmod{3}$ has $3$ as a local torsion prime. We computed our examples without regard to these considerations.  We comment only that we verified that every computation which violates the lower bound of \cite{prasad2021relating} does not satisfy their hypotheses.

\subsubsection{Normalizer of a split Cartan: Varying $j$-invariants}

We use the formula of Zywina \cite[Theorem 1.2]{zywinapossible}
$$j_2(t) = 27\frac{(t+1)^3(t-3)^3}{t^3}$$
such that any elliptic curve with such a $j$-invariant has $\op{Gal}(\Q(E[3])/\Q)$ contained in the normalizer of a split Cartan subgroup.  For any $t$, let
$E_t$ denote the elliptic curve of smallest conductor with $j$-invariant $j_2(t)$ and let $K_t$ denote the field $\Q(E_t[3])$.  We computed
 $\op{Sel}_3(E_t/\Q)$ as well as 
the Galois module structure of $\op{Cl}_{K_t}/3$
for the 599 elliptic curves in the set
$$S_0 = \{E_{a/b} \mid |a| \leq 50, 1 \leq b \leq 10, [K_t:\Q]=8 \}.$$

Let $L_t$ denote the unique biquadratic subfield of $K_t$; note that $L_t$ necessarily contains
$\Q(\sqrt{-3})$.   In each case there was $d_t \geq 0$ such that there was an isomorphism
$$\op{Cl}_{K_t}/3 \cong \op{Cl}_{L_t}/3 \oplus E_t[3]^{d_t}$$
as $\op{Gal}(K_t/\Q)$-modules.  Note that $\op{Gal}(K_t/\Q)$ acts on $\op{Cl}_{L_t}/3$ via its abelianization.  

Our interest here is primarily in comparing $\op{Sel}_3(E_t/\Q)$ and $d_t$.  We briefly report the data for $\op{Cl}_{L_t}/3$:

\centerline{\begin{tabular}{c|c}
$\dim \op{Cl}_{L_t}/3$ & \# $t$ \\ \hline
0 & 447 \\
1 & 123 \\
2 & 28 \\
3 & 1
\end{tabular}}

\noindent
The only curve in our sample with $\dim \op{Cl}_{L_t}/3 = 3$ was $E_{-46/3}$ with $L_t = \Q(\sqrt{-3},\sqrt{2917})$.

Turning to the other quantities, we have the following.

\centerline{\begin{tabular}{c|c||c}
$\dim \op{Sel}_3(E_t/\Q)$ & $d_t$ & \# $t$ \\ \hline
0 & 0 & 103 \\
0 & 1 & 26 \\
1 & 0 & 228 \\
1 & 1 & 52 \\
1 & 2 & 2 \\
2 & 0 & 95 \\
2 & 1 & 74 \\
2 & 2 & 2 \\
3 & 0 & 4 \\
3 & 1 & 12 \\
3 & 2 & 1
\end{tabular}}

We remark that the restriction to $E$ with specified $3$-torsion fields obviously biases the distribution of $3$-Selmer groups.
Although it is unwise to extrapolate too much from this limited data, we note that for $t$ with $\dim \op{Sel}_3(E_t/\Q) \leq 1$, the dimension of
$\op{Cl}_{K_t}/3$ appears to be roughly the same (being nonzero around $19\%$ of the time) whether the Selmer group has dimension zero or one.  By contrast, for Selmer groups of dimension two, the proportion having $d_t \geq 1$ is much larger at $44\%$.  For Selmer groups of dimension three it is much larger than that, in this limited sample.

Granting the limits of this data set, it certainly appears that $\op{Sel}_3(E_t/\Q)$ has a significant effect on $d_t$, but there are additional influences as well which are not currently clear.

\subsubsection{Normalizer of a split Cartan: Fixed $j$-invariants}

We also investigated two quadratic twist families.  Let $E_1$ denote the elliptic curve
$$y^2+xy+y = x^3-141x+624$$
with $j(E_1) = 857375/8$ and conductor $10082=2 \cdot 71^2$.  For any squarefree $t$, let $E_{1,t}$ denote the quadratic twist of $E_1$ by $t$
and let $K_{1,t}$ denote the field $\Q(E_{1,t}[3])$.  Then
$\op{Gal}(K_{1,t}/\Q)$ is the normalizer of a split Cartan subgroup of $\op{GL}_2(\F_3)$.
We note that each $K_{1,t}$ contains the biquadratic field $L_1=\Q(\sqrt{-3},\sqrt{-71})$ which has trivial $3$-class group.

Consider the set of 608 elliptic curves
$$S_1 = \left\{ E_{1,t} \mid 1 \leq t \leq 1000, t \text{~squarefree} \right\}.$$
In each case we computed a $\op{Gal}(K_{1,t}/\Q)$-isomorphism
$$\op{Cl}_{K_{1,t}}/3 \cong E_{1,t}[3]^{d_{1,t}}$$
for some $d_{1,t}$.
We computed the following data.

\centerline{
\begin{tabular}{c|c||c}
$\dim \Sel_3(E_{1,t}/\Q)$ & $d_{1,t}$ & \# $t$ \\ \hline
0 & 0 & 341 \\
0 & 1 & 1 \\
1 & 0 & 74 \\
1 & 1 & 31 \\
1 & 2 & 2 \\
2 & 1 & 140 \\
2 & 2 & 18 \\
3 & 2 & 1 
\end{tabular}}

We note here that the dependence of $d_{1,t}$ on $\Sel_3(E_{1,t}/\Q)$ is more clear than in the case of varying $j$-invariants: the lower bound of \cite{prasad2021relating} (although as noted the hypotheses are often violated in this data set) appears to control most of the behavior of $d_{1,t}$.

We computed analogous data for the elliptic curve $E_2$ given by
$$y^2 = x^3-83667346875x - 10711930420406250$$
with $j(E_2)= -42875/8$ and
conductor $6962 = 2 \cdot 59^2$.  With notation as above, we consider the set of 608 quadratic twists
$$S_2 = \left\{ E_{2,t} \mid 1 \leq t \leq 1000, t \text{~squarefree} \right\}.$$
Each $3$-division field field $K_{2,t}$ has Galois group the normalizer of a split Cartan subgroup and contains the biquadratic field $L_2=\Q(\sqrt{-3},\sqrt{-59}).$  This field has class group
$\Z/3$, and for each $t$ we computed a $\op{Gal}(K_{2,t}/\Q)$-isomorphism
$$\op{Cl}_{K_{2,t}}/3 \cong \op{Cl}_{L_2}/3 \oplus E_{2,t}[3]^{d_{2,t}}$$
for some $d_{2,t}$.

\centerline{
\begin{tabular}{c|c||c}
$\dim \Sel_3(E_{2,t}/\Q)$ & $d_{2,t}$ & \# $t$ \\ \hline
0 & 0 & 54 \\
0 & 1 & 18 \\
1 & 0 & 332 \\
1 & 1 & 165 \\
2 & 0 & 18 \\
2 & 1 & 13 \\
2 & 2 & 2 \\
3 & 2 & 6
\end{tabular}}

This behavior appears somewhat less straightforward than for $S_1$.  The distribution of $\Sel_3(E_{2,t}/\Q)$ is obviously quite different than that of $\Sel_3(E_{1,t}/\Q)$ and the effect on $d_{2,t}$ is less clear.  We do note that the distributions of $d_{1,t}$ and $d_{2,t}$ ignoring Selmer groups are similar, which is somewhat curious.

\subsubsection{Normalizer of a non-split Cartan: fixed $j$-invariant}

We compiled very limited data in one case with mod $3$ image of Galois the normalizer of a non-split Cartan subgroup.  Consider the elliptic curve
$E_3$ given by
$$y^2 = x^3+x^2-2x-8$$
with $j(E_3)=-64$ and
conductor $1568=2^5 \cdot 7^2$.  We consider its
set of 61 twists
$$S_3 = \left\{ E_{3,t} \mid 1 \leq t \leq 100, t \text{~squarefree}\right\}.$$
For each $t$ we have an isomorphism of $\op{Gal}(K_{3,t}/\Q)$-modules
$$\op{Cl}_{K_{3,t}}/3 \cong E_{3,t}[3]^{d_{3,t}}$$
for some $d_{3,t}$.

\centerline{
\begin{tabular}{c|c||c}
$\dim \Sel_3(E_{3,t}/\Q)$ & $d_{3,t}$ & \# $t$ \\ \hline
0 & 0 & 18 \\
0 & 1 & 2 \\
1 & 0 & 18 \\
1 & 1 & 14 \\
1 & 2 & 1 \\
2 & 1 & 8
\end{tabular}}

There is  too little data here to provide anything more than wild speculations; unfortunately the computations rapidly became very time consuming beyond this point.

\bibliographystyle{alpha}
\bibliography{references}
\end{document}